\theoremstyle{plain}
\newtheorem{theorem}{Theorem}[section]
\newtheorem{lemma}[theorem]{Lemma}
\newtheorem{corollary}[theorem]{Corollary}
\theoremstyle{definition}
\newtheorem{definition}[theorem]{Definition}
\newtheorem{remark}[theorem]{Remark}
\numberwithin{equation}{section}
\DeclareMathOperator*{\tr}{tr}
\title[Non-uniformly elliptic equations in non-divergence form]{Regularity for solutions of non-uniformly elliptic equations in non-divergence form}
\author{Jongmyeong Kim}
\address{Institute of Mathematics, Academia Sinica, Taipei 106319, Republic of China}
\email{jmkim@gate.sinica.edu.tw}
\author{Se-Chan Lee}
\address{Research Institute of Mathematics, Seoul National University, Seoul 08826, Republic of Korea}
\email{dltpcks1@snu.ac.kr}
\subjclass[2020]{35B50, 35B65, 35D35, 35J70, 35J75}
\keywords{Non-uniformly elliptic equations; Aleksandrov--Bakelman--Pucci maximum principle; Interior estimates}
\thanks{Se-Chan Lee is supported by Basic Science Research Program through the National Research Foundation of Korea (NRF) funded by the Ministry of Education (2022R1A6A3A01086546).}
\begin{document}

\begin{abstract}
	We prove the Aleksandrov--Bakelman--Pucci estimate for non-uniformly elliptic equations in non-divergence form. Moreover, we investigate local behaviors of solutions of such equations by developing local boundedness and weak Harnack inequality. Here we impose an integrability assumption on ellipticity representing degeneracy or singularity, instead of specifying the particular structure of ellipticity.
\end{abstract}

\maketitle


\section{Introduction} \label{sec-introduction}
In this paper, we study regularity properties for solutions of non-uniformly elliptic equations in non-divergence form. To illustrate the issues, let us begin with the simplest example: a second-order, linear elliptic equation in non-divergence form 
\begin{equation}\label{eq-linear}
	a_{ij}D_{ij}u=f \quad \text{in $B_1 $},
\end{equation}
where a coefficient matrix $a=(a_{ij})_{1 \leq i, j \leq n}$ and an nonhomogeneous term $f$ are measurable. In order to capture the ellipticity of $a$, we introduce
\begin{equation}\label{eq-ellipticity}
	\lambda(x):=\inf_{\xi \in \mathbb{R}^n} \frac{\xi \cdot a(x)\xi}{|\xi|^2} \quad \text{and} \quad \Lambda(x):=\sup_{\xi \in \mathbb{R}^n} \frac{\xi \cdot a(x)\xi}{|\xi|^2}
\end{equation}
In particular, we say $a=(a_{ij})$ is \textit{uniformly elliptic} if there exist ellipticity constants $0<\lambda_0 \leq \Lambda_0<\infty$ such that
\begin{equation*}
	\lambda_0 \leq \lambda(x) \leq \Lambda(x) \leq \Lambda_0.
\end{equation*}
The regularity theory of (possibly nonlinear) uniformly elliptic operators in non-divergence form is by now classical; we refer to comprehensive books \cite{CC95, GT01} and references therein. 
In particular, Aleksandrov \cite{Ale66}, Bakelman \cite{Bak61}, and Pucci \cite{Puc66} independently proved a maximum principle: if $u \in C(\overline{B_1}) \cap W^{2, n}_{\mathrm{loc}}(B_1)$ is a strong subsolution of \eqref{eq-linear}, then there exists a constant $C=C(n, \lambda_0, \Lambda_0)>0$ such that
\begin{equation}\label{eq-abp}
		\sup_{B_1} u \leq \sup_{\partial B_1} u^++C\|f^-\|_{L^n(\Gamma^+(u^+))},
\end{equation}
where $\Gamma^+(u^+)$ is the upper contact set; see \Cref{sec-preliminaries} for the precise definition. The ABP maximum principle became a fundamental tool in establishing local estimates for the associated equations, such as local boundedness, weak Harnack inequality, and interior H\"older estimate.

The goal of this paper is to develop the ABP maximum principle and to derive interior a priori estimates for solutions of \textit{non-uniformly}, nonlinear elliptic equations. In our framework, the ellipticity functions $1/\lambda$ and $\Lambda$ are not necessarily bounded, but they satisfy some integrability conditions. To be precise, we define two measurable functions $\lambda, \Lambda : B_1 \to [0, \infty]$ such that $\lambda \leq \Lambda$,
\begin{align*}
	1/\lambda \in L^{p}(B_1), \quad \text{and} \quad \Lambda \in L^q(B_1).
\end{align*}
It is noteworthy that the uniformly elliptic case corresponds to the choice $p=q=\infty$. Moreover, we define a generalized version of \textit{Pucci extremal operators} $\mathcal{M}_{\lambda, \Lambda}^{\pm}$ by
\begin{align*}
	\mathcal{M}_{\lambda, \Lambda}^+(M)(x)&:=\Lambda(x) \sum_{e_i \geq 0} e_i(M) +\lambda(x) \sum_{e_i<0} e_i(M),\\
	\mathcal{M}_{\lambda, \Lambda}^-(M)(x)&:=\lambda(x) \sum_{e_i \geq 0} e_i(M) +\Lambda(x) \sum_{e_i<0} e_i(M), 
\end{align*}
where $x \in B_1$, $M \in \mathcal{S}^n:=\{M : \text{$M$ is an $n \times n$ real symmetric matrix}\}$, and $e_i(M)$'s are the eigenvalues of $M$. For constant ellipticity $\lambda_0$ and $\Lambda_0$, it reduces to the classical Pucci extremal operators; see \cite{CC95, CIL92} for instance. 

Throughout the paper, we assume that a pair $(p, q)$ satisfies
\begin{equation}\label{eq-assumption1}
	\frac{1}{p}+\frac{1}{q} \leq \frac{1}{n}.
\end{equation}
and we set the constants $\theta, \tau \in [n, \infty]$ to satisfy
\begin{equation}\label{assumption2}
	\frac{1}{\theta}=\frac{1}{n}-\frac{1}{p}-\frac{1}{q} \quad \text{and} \quad \frac{1}{\tau}=\frac{1}{n}-\frac{1}{p}.
\end{equation}
Then we are concerned with an $L^{\theta}$-strong solution $u$ of
\begin{equation*}
	\mathcal{M}_{\lambda, \Lambda}^+(D^2u)(x) \geq f(x)
\end{equation*}
or 
\begin{equation*}
	\mathcal{M}_{\lambda, \Lambda}^-(D^2u)(x) \leq f(x)
\end{equation*}
for an nonhomogeneous term $f \in L^{\tau}(B_1)$; see \Cref{sec-preliminaries} for details. 

We begin with the Aleksandrov-Bakelman-Pucci estimates for $L^{\theta}$-strong subsolutions. Several corollaries of \Cref{thm-ABP} are discussed at the end of \Cref{sec-ABP}.  
\begin{theorem}[ABP estimates]\label{thm-ABP}
	Let $f \in L^{\tau}(B_1)$. Suppose that $u \in W^{2, \theta}(B_1)$ is an $L^{\theta}$-strong solution of 
	\begin{align*}
		\mathcal{M}_{\lambda, \Lambda}^+(D^2u) \geq f \quad \text{in $B_1$}.
	\end{align*}
	Then there exists a universal constant $C=C(n)>0$ such that
	\begin{align*}
		\sup_{B_1} u \leq \sup_{\partial B_1} u^++C \left(\int_{\Gamma^+(u^+)} \left(\frac{f^-(x)}{\lambda(x)}\right)^n \,\mathrm{d}x\right)^{1/n}.
	\end{align*} 
\end{theorem}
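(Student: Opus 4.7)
The plan is to mimic the classical Aleksandrov--Bakelman--Pucci argument, exploiting a key algebraic simplification: on the upper contact set of $u^+$ the matrix $D^2u$ is negative semi-definite, so the operator $\mathcal{M}^+_{\lambda,\Lambda}$ collapses to $\lambda \cdot \tr$, and the natural weight $f^-/\lambda$ emerges directly from the differential inequality.

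Replacing $u$ with $u - \sup_{\partial B_1} u^+$, I may assume $u \leq 0$ on $\partial B_1$; the hypothesis is preserved since the operator annihilates constants. Set $M := \sup_{B_1} u$, and assume $M > 0$. On the upper contact set $\Gamma^+ := \Gamma^+(u^+) \subset B_1$, $u^+$ admits a supporting hyperplane from above at each point. A standard argument shows that $u^+ > 0$ there (otherwise the supporting plane would be identically $0$, forcing $u^+ \equiv 0$ and contradicting $M>0$), so $u = u^+$ on $\Gamma^+$ and $D^2 u(x) \leq 0$ for a.e.\ $x \in \Gamma^+$. Since every eigenvalue of $D^2u(x)$ is then non-positive, the definition of $\mathcal{M}^+_{\lambda, \Lambda}$ yields
\begin{equation*}
\mathcal{M}^+_{\lambda, \Lambda}(D^2u)(x) = \lambda(x)\,\tr(D^2u)(x) \quad\text{a.e. on }\Gamma^+,
\end{equation*}
and the hypothesis becomes $\lambda(x)\,\Delta u(x) \geq f(x)$ a.e. on $\Gamma^+$. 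On $\{\lambda>0\}\cap\Gamma^+$ this gives $\tr(-D^2u)(x) \leq f^-(x)/\lambda(x)$; on $\{\lambda=0,\, f^->0\}\cap \Gamma^+$ the right-hand side of the desired conclusion is already infinite, so that case is vacuous.

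Applying the AM--GM inequality to the non-negative symmetric matrix $-D^2u(x)$ produces the pointwise bound
\begin{equation*}
\lvert\det D^2u(x)\rvert = \det\bigl(-D^2u(x)\bigr) \leq \left(\frac{\tr(-D^2u)(x)}{n}\right)^{\!n} \leq \frac{1}{n^n}\left(\frac{f^-(x)}{\lambda(x)}\right)^{\!n}.
\end{equation*}
The remaining geometric ingredients are classical: (i) the covering fact $Du(\Gamma^+) \supseteq B_{M/2}$, obtained by maximising $u(y) - p \cdot y$ over $\overline{B_1}$ for $|p| < M/2$ and using $\diam(B_1) = 2$ to force the maximiser into the interior; and (ii) the Aleksandrov area formula $|Du(\Gamma^+)| \leq \int_{\Gamma^+} \lvert\det D^2u\rvert\sd x$. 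Combining these with the pointwise bound above,
\begin{equation*}
|B_1|\left(\frac{M}{2}\right)^{\!n} \leq \frac{1}{n^n}\int_{\Gamma^+}\left(\frac{f^-(x)}{\lambda(x)}\right)^{\!n}\sd x,
\end{equation*}
and taking $n$-th roots gives the statement with $C = C(n)$.

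The main obstacle is technical rather than conceptual: justifying the a.e.\ sign property $D^2u \leq 0$ on $\Gamma^+$ and the area formula at the level of $u \in W^{2,\theta}(B_1)$ with $\theta \geq n$, rather than for $C^2$ functions where both are immediate. These are supplied by the by-now standard Aleksandrov theory for $W^{2,n}$ solutions (see, e.g., \cite{CC95, GT01}), and the weights $\lambda, \Lambda$ enter only through the pointwise step above, requiring no modification of the geometric machinery.
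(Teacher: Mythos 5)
Your proposal is correct, and it takes a genuinely more direct route than the paper. The paper's proof is a three-stage approximation: first an ABP estimate for $C$-viscosity solutions with \emph{continuous, bounded} ellipticity (\Cref{lem-ABPreduction}), then construction of a strong solution ``corrector'' $\psi_{j,k}$ via \Cref{lem-existence}, then a double-indexed limit in which $\lambda,\Lambda$ are truncated and regularized into $\lambda_j, \Lambda_j \in C(\overline{B_1})$ and the induced right-hand side $f_j$ is further smoothed to $f_{j,k}$. The reason all of this is necessary in the paper's framework is that \Cref{lem-ABPreduction} is stated for viscosity solutions with continuous data, so one must laboriously translate the $W^{2,\theta}$ strong solution with merely measurable, possibly unbounded $\lambda, \Lambda$ into that setting.

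Your argument sidesteps this entirely by isolating the correct dichotomy: the \emph{geometric} ABP inequality
\begin{equation*}
|B_1|\Bigl(\tfrac{M}{2}\Bigr)^{\!n} \leq \int_{\Gamma^+(u^+)} |\det D^2u|\,\mathrm{d}x
\end{equation*}
is a statement about $u \in W^{2,n}(B_1)\cap C(\overline{B_1})$ alone, with no reference to the operator; it is obtained by mollifying $u$, applying the $C^2$ Aleksandrov--Bakelman lemma, and passing to the limit using the upper semicontinuity of contact sets (\Cref{lem-contactset}) and $L^n$-convergence of the Hessians, with $\det$ converging in $L^1$ by multilinearity. The ellipticity enters only afterwards, through the \emph{pointwise} a.e.\ estimate on $\Gamma^+(u^+)$, which is where your observation $D^2u \leq 0 \Rightarrow \mathcal{M}^+_{\lambda,\Lambda}(D^2u) = \lambda \tr D^2u$ yields $|\det D^2u| \leq n^{-n}(f^-/\lambda)^n$. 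Since no boundedness or continuity of $\lambda, \Lambda, f$ is used at this stage (and $\lambda>0$ a.e.\ because $1/\lambda \in L^p$), there is no need to approximate the coefficients at all. What this buys you is a one-pass proof; what the paper's approach buys is a viscosity-solution framework that also feeds directly into the later Corollaries (strong solutions are $C$-viscosity solutions, comparison principle). One small gloss worth making explicit if you write this up: the covering step should be phrased in terms of the normal mapping $\chi(\Gamma^+(u^+)) \supseteq B_{M/2}$ rather than $Du(\Gamma^+(u^+))$, since $Du$ is only defined a.e.\ for $W^{2,n}$ functions, and the Aleksandrov--Bakelman area bound $|\chi(\Gamma^+)| \leq \int_{\Gamma^+}|\det D^2u|$ is precisely what the mollification argument delivers.
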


After the pioneering works by Aleksandrov--Bakelman--Pucci, the ABP maximum principle has been widely studied in different contexts. Just to name a few, the ABP estimate, concerning uniformly elliptic/parabolic equations in non-divergence form, was achieved for 
\begin{enumerate}[label=(\roman*)]
	\item viscosity solutions of fully nonlinear elliptic equations \cite{Caf88, Caf89};
	
	\item strong solutions of linear parabolic equations \cite{Kry76, Tso85};
	
	\item viscosity solutions of fully nonlinear elliptic equations \cite{Wan92a};
	
	\item $L^p$-viscosity solutions of fully nonlinear elliptic/parabolic equations \cite{CCKS96, CKS00};
	
	\item viscosity solutions of fully nonlinear elliptic equations with gradient growth terms \cite{KS07, KS22}.
\end{enumerate}
We refer to \cite{Cab95, Esc93} for improvements of the ABP estimates in other directions. On the other hand, non-uniformly elliptic equations with particular structure have been considered relatively recently by several authors in various circumstances: \cite{BBLL24, DFQ09, Imb11} when an operator is given by $|Du|^{\gamma}\mathcal{M}_{\lambda_0, \Lambda_0}^{\pm}(D^2u)$ with $\gamma>-1$, \cite{ACP11} for $p$-Laplace equations and the mean curvature flow, and \cite{IS16, Moo15} for elliptic equations that hold only where the gradient is large. In this paper, we concentrate on analyzing non-uniformly elliptic equations whose degeneracy and singularity are implicitly encoded in the integrability of $1/\lambda$ and $\Lambda$.

We next move our attention to local estimates for solutions of Pucci extremal operators. We first show the local boundedness result for strong subsolutions.
\begin{theorem}[Local boundedness]\label{thm-localboundedness}
	Let $f \in L^{\tau}(B_1)$. Suppose that $u \in W^{2, \theta}_{\mathrm{loc}}(B_1)$ is an $L^{\theta}$-strong solution of 
	\begin{align*}
		\mathcal{M}_{\lambda, \Lambda}^+(D^2u) \geq f \quad \text{in $B_1$}.
	\end{align*}
	Then for $0<t \leq n$, we have
	\begin{align*}
		\sup_{B_{1/2}}u \leq C\left(\left\|(u^+)^{t/n}\Lambda/\lambda\right\|_{L^n(B_1)} ^{n/t}+\|f^-/\lambda\|_{L^n(B_1)} \right)
	\end{align*}
for a universal constant $C=C(n, t)>0$. 

In particular, for $t>0$, there exists $C=C(n, t, \|1/\lambda\|_{L^p(B_1)}, \|\Lambda\|_{L^q(B_1)})>0$ such that
\begin{align*}
	\sup_{B_{1/2}} u \leq C\left(\|u^+\|_{L^{\theta t/n}(B_1)}+\|f^-/\lambda\|_{L^n(B_1)} \right).
\end{align*}
\end{theorem}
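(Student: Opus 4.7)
The plan is to derive the second displayed inequality from the first via H\"older's inequality, so the heart is the first inequality, which I would establish in two stages: first the endpoint $t=n$ using \Cref{thm-ABP} applied to a cut-off product, then the general range $0<t<n$ via a scaling-interpolation-absorption argument.

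The second inequality follows from the first by H\"older. Since $\tfrac{1}{n}=\tfrac{1}{\theta}+\tfrac{1}{p}+\tfrac{1}{q}$ by \eqref{assumption2}, applying H\"older with exponents $\tfrac{\theta n}{t}$, $q$, $p$ to the product $(u^+)^{t/n}\cdot\Lambda\cdot(1/\lambda)$ gives
\[
\bigl\|(u^+)^{t/n}\Lambda/\lambda\bigr\|_{L^n(B_1)}^{n/t}\le \|u^+\|_{L^{\theta t/n}(B_1)}\,\|\Lambda\|_{L^q(B_1)}^{n/t}\,\|1/\lambda\|_{L^p(B_1)}^{n/t},
\]
matching the stated constant dependence.

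For the endpoint $t=n$, I plan to apply \Cref{thm-ABP} to $w=u\eta$, where $\eta\in C^2_c(B_1)$ satisfies $\eta\equiv 1$ on $B_{1/2}$, $0\le \eta\le 1$, and $|D\eta|,|D^2\eta|\le C$. Because $w$ vanishes on $\partial B_1$ and equals $u$ on $B_{1/2}$, it suffices to bound $\sup_{B_1} w$. Expanding $D^2 w=\eta D^2 u+Du\otimes D\eta+D\eta\otimes Du+u D^2\eta$ and using the pointwise inequality $\mathcal{M}^+(A+B)\ge \mathcal{M}^+(A)+\mathcal{M}^-(B)$ (a consequence of the convexity of $\mathcal{M}^+$ and $\mathcal{M}^+\ge\mathcal{M}^-$), one finds that $w$ is a subsolution of $\mathcal{M}^+(D^2 w)\ge F$ where
\[
F=\eta f+\mathcal{M}^-\bigl(Du\otimes D\eta+D\eta\otimes Du+u D^2\eta\bigr)\ge \eta f-C\Lambda\bigl(|Du||D\eta|+u^+|D^2\eta|\bigr).
\]
Applying \Cref{thm-ABP} to $w$ then yields
\[
\sup_{B_{1/2}}u\le C\bigl(\|f^-/\lambda\|_{L^n(B_1)}+\|u^+\Lambda/\lambda\|_{L^n(B_1)}+\||Du||D\eta|\Lambda/\lambda\|_{L^n(B_1)}\bigr).
\]
The principal technical issue is the gradient term, which I expect to handle by choosing $\eta=\zeta^\beta$ for a base cut-off $\zeta$ and $\beta$ sufficiently large that $|D\eta|^2\le C\eta|D^2\eta|$, then applying Cauchy's inequality to absorb $|Du|^2\Lambda$ via a Caccioppoli-type interior $L^2$-bound on $Du$ obtained by iterating \Cref{thm-ABP} on small sub-balls.

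For $0<t<n$, set $\phi(r):=\sup_{B_r} u^+$ and use the translation-scaling form of the $t=n$ estimate,
\[
\sup_{B_{\rho/2}(x)}u\le C\bigl(\rho^{-1}\|u^+\Lambda/\lambda\|_{L^n(B_\rho(x))}+\rho\|f^-/\lambda\|_{L^n(B_\rho(x))}\bigr),
\]
with $\rho=R-r$ and $x$ varying over $B_r$ for $1/2\le r<R\le 1$. Inserting the pointwise bound $u^+\le\phi(R)^{1-t/n}(u^+)^{t/n}$ on the right-hand side and applying Young's inequality with exponents $(n/t,n/(n-t))$ produces
\[
\phi(r)\le \tfrac12\phi(R)+C(R-r)^{-n/t}\|(u^+)^{t/n}\Lambda/\lambda\|_{L^n(B_1)}^{n/t}+C\|f^-/\lambda\|_{L^n(B_1)}.
\]
The standard Giaquinta-type absorption lemma (with $r_0=1/2$, $R_0=1$, $\theta=1/2$, $\alpha=n/t$) then gives the desired estimate on $\phi(1/2)$.

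I expect the main obstacle to be in the first stage, specifically absorbing the gradient term $|Du||D\eta|\Lambda/\lambda$. In the uniformly elliptic setting this step is routine Cauchy-Schwarz, but with $\Lambda\in L^q$ and $1/\lambda\in L^p$ satisfying only \eqref{eq-assumption1}, the absorption must be executed delicately via a tuned cut-off together with an auxiliary $L^2$-type interior bound on $Du$ derived by iterating \Cref{thm-ABP} on small sub-balls.
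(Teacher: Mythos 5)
Your overall framework (cut-off product, apply ABP, interpolate for $0<t<n$, derive the second display from the first by H\"older) matches the paper's strategy, and the H\"older step is essentially right (the exponent on the first factor should be $\theta$ rather than $\theta n/t$, but the resulting inequality you write is correct since $\|(u^+)^{t/n}\|_{L^{\theta}}=\|u^+\|_{L^{\theta t/n}}^{t/n}$). However, the part you flag as ``the principal technical issue''---absorbing the gradient cross-term $|Du|\,|D\eta|\,\Lambda$---is exactly where the proposal has a genuine gap, and the route you sketch (a Caccioppoli-type interior $L^{2}$ bound on $Du$ obtained by iterating \Cref{thm-ABP} on small sub-balls) does not work. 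There is no energy identity for non-divergence-form equations, \Cref{thm-ABP} gives $L^{\infty}$ control of $u$ in terms of $f^-/\lambda$ but no gradient information, and a Cauchy--Schwarz absorption of $\Lambda|Du|^2$ into $\mathcal{M}^+(\eta D^2u)$ is unavailable since the subsolution inequality $\eta\,\mathcal{M}^+(D^2u)\ge\eta f$ cannot dominate a quadratic gradient quantity.

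The missing idea in the paper is that the gradient term is bounded \emph{pointwise on the upper contact set} of $v=\eta u$, with $\eta(x)=(1-|x|^2)^{\beta}$. At a point $x\in\Gamma^+_v$ there is a slope $p$ with $v(y)\le v(x)+\langle p,y-x\rangle$ for all $y\in B_1$, and since $v$ vanishes on $\partial B_1$, sending $y$ to the boundary in the direction $-p/|p|$ gives $|p|=|Dv(x)|\le v(x)/(1-|x|)$. Combined with $Du=\eta^{-1}(Dv-uD\eta)$ and the explicit form of $D\eta$, this yields $|Du|\le 2(1+\beta)\eta^{-1/\beta}u$ on $\Gamma^+_v$, hence $|Du||D\eta|\le 8\beta^2\eta^{-2/\beta}v$ and $u|D^2\eta|\le 4\beta^2\eta^{-2/\beta}v$ there. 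Since the ABP estimate integrates $(\widetilde f^-/\lambda)^n$ only over $\Gamma^+_v$, this pointwise bound is all that is needed: one gets $\sup v\le C\bigl((\sup v^+)^{1-2/\beta}\|(u^+)^{2/\beta}\Lambda/\lambda\|_{L^n}+\|f^-/\lambda\|_{L^n}\bigr)$, and choosing $\beta=2n/t$ and absorbing $(\sup v^+)^{1-t/n}$ by Young's inequality finishes the proof directly for all $0<t\le n$, with no separate endpoint or Giaquinta-type iteration. Your interpolation stage is not wrong, but it is unnecessary once the contact-set gradient bound is in place, and without that bound the first stage does not close.
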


We also prove the weak Harnack inequality for viscosity supersolutions under a stronger assumption on $(p, q)$.
\begin{theorem}[Weak Harnack inequality]\label{thm-weakharnack}
	Let $f \in L^{\tau}(B_1)$ and assume that
	\begin{align*}
		\frac{1}{p}+\frac{1}{q}<\frac{1}{2n}.
	\end{align*}
	Moreover, suppose that $u \in W^{2, \theta}_{\mathrm{loc}}(B_1)$ is an $L^{\theta}$-strong solution of 
	\begin{align*}
		\mathcal{M}_{\lambda, \Lambda}^-(D^2u) \leq f \quad \text{in $B_1$}.
	\end{align*}
	If $u$ is nonnegative in $B_{1}$, then we have
	\begin{align*}
	\left\|u\right\|_{L^t(B_{1/2})}\leq C\left(\inf_{B_{1/2}}u+\|f/\lambda\|_{L^n(B_{1})}\right)
\end{align*}
for some positive constants $t$ and $C$ which depend only on $n$, $\|1/\lambda\|_{L^p(B_1)}$, and $\|\Lambda\|_{L^q(B_1)}$.

\end{theorem}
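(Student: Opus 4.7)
The plan is to follow the Krylov--Safonov scheme: (i) combine the ABP estimate (\Cref{thm-ABP}) with a suitable radial barrier to derive a point-to-measure estimate; (ii) iterate this estimate via a Calder\'on--Zygmund dyadic decomposition to obtain power decay of the distribution function of $u$; (iii) conclude by integrating in the level.

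For step (i), I would take a radial barrier of the form $\phi(x) = c_1 - c_2|x|^{-\alpha}$ on an annulus, with $\alpha$ large, so that
\begin{equation*}
	\mathcal{M}^-_{\lambda,\Lambda}(D^2\phi)(x) \leq C_\alpha\,\Lambda(x)|x|^{-\alpha-2}\chi_A(x),
\end{equation*}
and $\phi$ has the geometry needed to separate the desired levels of $u$. Assuming, after normalization, that $\inf_{B_{1/2}} u + \|f/\lambda\|_{L^n(B_1)}$ is suitably small, I would apply \Cref{thm-ABP} to $(\phi - u)^+$. The integrability of $(\Lambda/\lambda)|x|^{-\alpha-2}$ in $L^n$ on the annulus follows from H\"older's inequality using $\Lambda \in L^q$, $1/\lambda \in L^p$, and local integrability of the power weight. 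This yields the key estimate: there exist $M > 1$ and $\mu \in (0,1)$, depending only on $n$, $\|1/\lambda\|_{L^p(B_1)}$ and $\|\Lambda\|_{L^q(B_1)}$, such that $|\{u \leq M\} \cap B_{1/2}| \geq \mu|B_{1/2}|$ whenever $\inf_{B_{1/4}} u \leq 1$.

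For step (ii), I would iterate this measure estimate using a Calder\'on--Zygmund dyadic cube decomposition, as in \cite{CC95} (Chapter~4). For the iteration to go through, the point-to-measure estimate must hold on dyadic subcubes with constants that do not degenerate as the scale shrinks. This is where the strict hypothesis $1/p + 1/q < 1/(2n)$ enters: it yields $\theta > 2n$, providing a strictly subcritical scaling that gains a positive power of the side-length at each dyadic step and keeps the relevant constants bounded. The resulting power decay
\begin{equation*}
	|\{u > s\} \cap B_{1/2}| \leq C s^{-\epsilon},
\end{equation*}
after absorbing the nonhomogeneous term via $u \mapsto u + \|f/\lambda\|_{L^n(B_1)}$, then gives the conclusion for any $t \in (0,\epsilon)$ through the layer-cake formula $\|u\|_{L^t}^t = t\int_0^\infty s^{t-1}|\{u > s\}|\,\mathrm{d}s$.

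The main obstacle is step (ii): controlling $M$ and $\mu$ uniformly under restriction to arbitrarily small dyadic subcubes. The strict inequality $1/p + 1/q < 1/(2n)$ is essential here, since the critical case $1/p + 1/q = 1/(2n)$ produces only a logarithmic gain per dyadic step, which is insufficient to sustain the geometric iteration underlying the Calder\'on--Zygmund argument and hence to extract a positive exponent $\epsilon$ of integrability.
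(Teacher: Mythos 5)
You take the Caffarelli--Cabr\'e route (radial barrier plus Calder\'on--Zygmund dyadic iteration), whereas the paper follows the Krylov/Gilbarg--Trudinger route: set $w=-\log\overline{u}$, apply the ABP estimate to $v=\eta w$ with $\eta=(1-|x|^2)^\beta$, and then invoke the cube decomposition lemma (\Cref{lem-cube}, \cite[Lemma 9.23]{GT01}) followed by \cite[Lemma 9.7]{GT01} to turn the resulting log-type bound into power decay. Both are classical paths to the weak Harnack inequality, and your step (i) with the barrier on an annulus is plausible. However, your explanation of where and how the hypothesis $1/p+1/q<1/(2n)$ enters is not correct, and the scaling concern you flag in step (ii) is not resolved by the mechanism you propose.

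First, the arithmetic is off: from $1/\theta=1/n-1/p-1/q$ and $1/p+1/q<1/(2n)$ one gets $1/\theta>1/(2n)$, i.e.\ $\theta<2n$, not $\theta>2n$. Second, the hypothesis does not act through a ``subcritical Sobolev exponent giving a gain of a power of side-length per dyadic step.'' In the paper it is used in a concrete, different way: $1/p+1/q<1/(2n)$ is precisely what makes $\Lambda/\lambda\in L^{2n}(B_1)$ by H\"older, and the ABP right-hand side in the log-transformed estimate is controlled by
\begin{equation*}
	\int_{\alpha\leq|x|\leq 1}\Bigl[\Bigl(\tfrac{\Lambda}{\lambda}-\tfrac{\beta}{1-|x|^2}\Bigr)_{\!+}\tfrac{\beta}{1-|x|^2}\Bigr]^n \mathrm{d}x
	\le \int_{U_\beta}\Bigl(\tfrac{\Lambda}{\lambda}\Bigr)^{2n}\mathrm{d}x
	\le \|1/\lambda\|_p^{2n}\|\Lambda\|_q^{2n}\,|U_\beta|^{\,1-2n/p-2n/q},
\end{equation*}
with $U_\beta=\{\Lambda/\lambda\geq\beta/(1-|x|^2)\}$ and the Chebyshev bound $|U_\beta|\le\beta^{-2n}\int(\Lambda/\lambda)^{2n}$. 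The strict inequality makes the exponent $1-2n/p-2n/q$ positive, and one then makes the whole expression small by choosing $\beta$ large. That is the actual mechanism, and it is quantitatively different from what you describe.

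Third, and this is the substantive gap in your step (ii): if you rescale to a dyadic subcube of side $r$, the rescaled norms $\|1/\widetilde\lambda\|_{L^p(B_1)}$ and $\|\widetilde\Lambda\|_{L^q(B_1)}$ pick up factors $r^{-n/p}$ and $r^{-n/q}$, and a direct H\"older computation gives $\|\widetilde\Lambda/\widetilde\lambda\|_{L^{2n}(B_1)}^{2n}\lesssim r^{-n}\|\Lambda\|_q^{2n}\|1/\lambda\|_p^{2n}|B_r|^{1-2n/p-2n/q}\simeq r^{-2n^2(1/p+1/q)}$, which \emph{degrades} as $r\to0$. So the constants in the point-to-measure estimate do not become uniform across dyadic scales merely because $1/p+1/q<1/(2n)$; there is no automatic per-scale gain. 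Your proposal correctly identifies this as the difficulty but offers an unsubstantiated (and, as written, arithmetically wrong) reason why it should go away. A complete proof along either route must supply an honest argument at this step; this is the delicate point that the dyadic iteration has to address and which your outline currently leaves open.
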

As immediate consequences of \Cref{thm-localboundedness} and \Cref{thm-weakharnack}, we provide a Harnack inequality and an interior H\"older regularity of strong solutions in \Cref{sec-localest}.


We now describe two simple, but interesting observations regarding our main theorems:
\begin{enumerate}[label=(\roman*), wide, labelwidth=!, labelindent=0pt]
\item For $n=1$ and $\gamma>0$, let us consider a linear operator $Lu=|x|^{\gamma}u_{xx}$ in $B_1=(-1, 1)$. We then claim that $u(x)=|x|$ is a $C$-viscosity solution of $Lu=0$ in $B_1$; see \Cref{def-vis} for the definition of $C$-viscosity solutions. Indeed, for $x_0 \in B_1 \setminus \{0\}$, then $u_{xx}(x_0)=0$ and so $Lu(x_0)=0$ in the classical sense. For $x_0=0$, if we let $\varphi \in C^2(B_1)$ be a test function such that $u-\varphi$ has a local maximum (or minimum) at $0$, then 
\begin{equation*}
	L\varphi(0)=|x|^{\gamma}\varphi_{xx}|_{x=0}=0.
\end{equation*}
Therefore, we conclude that $u$ is a viscosity solution of $Lu=0$ in $B_1$. 

On the other hand, if we choose ellipticity functions $\lambda(x)=|x|^{\gamma}$ and $\Lambda(x)=1$, then it immediately follows that a viscosity solution $u$ of $Lu=0$ in $B_1$ satisfies
\begin{equation*}
	\mathcal{M}_{\lambda, \Lambda}^+(D^2u) \geq 0 \quad \text{and} \quad \mathcal{M}_{\lambda, \Lambda}^-(D^2u) \leq 0 \quad\text{in $B_1$}.
\end{equation*}
Moreover, it is easy to see that $\Lambda \in L^{\infty}(B_1)$ and $1/\lambda \in L^p(B_1)$ for any $p<1/\gamma$, while $u$ does not enjoy the (weak) minimum principle in $B_1$. Hence, even though we impose stronger integrability condition on $1/\lambda$ and $\Lambda$, \Cref{thm-ABP} does not hold for general ``viscosity solution" $u$. In other words, this example shows that the ``strong solution" condition on $u$ is essential in our framework.

\item For $n=2$, we consider a linear operator $Lu=2u_{xx}+y^2u_{yy}$ in $B_1=\{(x, y): x^2+y^2<1\}$. Then ellipticity functions are given by $\lambda(x,y)=y^2$ and $\Lambda(x,y)=2$, where $1/\lambda=|y|^{-2} \notin L^1(B_1)$. It follows from a direct calculation that $u(x, y)=y^2\cos x$ is a classical (or strong) solution of $Lu=0$ in $B_1$. Since $u(0, 0)=0=\min_{\partial B_1}u$, $u$ does not satisfy the strong maximum principle and the weak Harnack inequality. In short, this example guarantees the necessity of (a version of) integrability criterion on $1/\lambda$ and $\Lambda$ in \Cref{thm-weakharnack}. Nevertheless, the optimality of our assumption on $(p, q)$ is not verified by this example, and it remains an interesting open problem.
\end{enumerate}

Let us finally discuss similar consequences for linear non-uniformly elliptic equations in divergence form. In particular, as a variational counterpart of \eqref{eq-linear}, the authors of \cite{BS21, Tru71} considered a weak solution $u$ of
\begin{equation*}
	-D_j(a_{ij}D_iu)=0 \quad \text{in $B_1$},
\end{equation*}
where the ellipticity of $a$ is measured by $\lambda$ and $\Lambda$ defined in \eqref{eq-ellipticity}. In \cite{Tru71}, Trudinger established interior estimates such as local boundedness, Harnack inequality, and H\"older regularity for weak solutions, provided that $1/\lambda \in L^p(B_1)$, $\Lambda \in L^q(B_1)$ with
	\begin{align*}
		\frac{1}{p}+\frac{1}{q}<\frac{2}{n}.
	\end{align*} 
 Recently, Bella and Sch\"affner \cite{BS21} improved the result by replacing the condition with
	\begin{align*}
		\frac{1}{p}+\frac{1}{q}<\frac{2}{n-1},
	\end{align*}
	and proved that this integrability condition is indeed sharp. The strategy of both papers mainly relied on a modification of the Moser iteration method, which is not available for operators in non-divergence form. We also refer to \cite{CMM18, MS68} for related results.

The paper is organized as follows. In \Cref{sec-preliminaries}, we summarize several notations which will be used throughout the paper. \Cref{sec-ABP} is devoted to the proof of \Cref{thm-ABP} by adopting sequential approximation techniques. Finally, we investigate local behaviors of strong solutions: local boundedness for subsolutions and weak Harnack inequality for supersolutions.

 \section{Preliminaries} \label{sec-preliminaries}
We first introduce a concept of $L^{\theta}$-strong solutions for Pucci extremal operators $\mathcal{M}_{\lambda, \Lambda}^{\pm}$.
\begin{definition}[$L^{\theta}$-strong solutions]\label{def-strong}
	Let $f \in L^{\tau}_{\text{loc}}(B_1)$.
	A function $u \in W^{2, \theta}_{\mathrm{loc}}(B_1)$ is an $L^{\theta}$-strong solution of $\mathcal{M}_{\lambda, \Lambda}^+(D^2u) \geq f$ in $B_1$, if
	\begin{equation*}
		\mathcal{M}_{\lambda, \Lambda}^+(D^2u):=\Lambda(x) \sum_{e_i \geq 0} e_i(D^2u(x)) +\lambda(x) \sum_{e_i<0} e_i(D^2u(x)) \geq f(x) \quad \text{a.e. in $B_1$},
	\end{equation*}
	where $e_i(M)$'s are the eigenvalues of $M$.
	
	In a similar way, a function $u \in W^{2, \theta}_{\mathrm{loc}}(B_1)$ is an $L^{\theta}$-strong solution of $\mathcal{M}_{\lambda, \Lambda}^-(D^2u) \leq f$ in $B_1$, if
	\begin{equation*}
		\mathcal{M}_{\lambda, \Lambda}^-(D^2u):=\lambda(x) \sum_{e_i \geq 0} e_i(D^2u(x)) +\Lambda(x) \sum_{e_i<0} e_i(D^2u(x)) \leq f(x) \quad \text{a.e. in $B_1$}.
	\end{equation*}
\end{definition}

\begin{remark}
	 The constants $\theta$ and $\tau$ are chosen to verify that $(\Lambda/\lambda) D^2u$ and $f/\lambda$ are contained in  $L^n$-space. If $1/\lambda$ and $\Lambda$ further belong to $L^{\infty}$-space, then it corresponds to the uniformly elliptic setting with $p=q=\infty$ and $\theta=\tau=n$. In this case, \Cref{def-strong} coincides with the definition of $L^n$-strong solutions given in \cite{CCKS96}.
\end{remark}

We provide simple properties of $\mathcal{M}^{\pm}=\mathcal{M}_{\lambda, \Lambda}^{\pm}$ as follows.
\begin{lemma}\label{lem-pucci}
	Let $M, N \in \mathcal{S}^n$. Then the following hold a.e.:
	\begin{enumerate}[label=(\roman*)]
		\item $\mathcal{M}^-(M) \leq \mathcal{M}^+(M)$.
		\item $\mathcal{M}^-(M)=-\mathcal{M}^+(-M)$.
		\item $\mathcal{M}^{\pm}(t M)=t\mathcal{M}^{\pm}(M)$ if $t \geq 0$.
		\item $\mathcal{M}^+(M)+\mathcal{M}^-(N) \leq \mathcal{M}^+(M+N) \leq \mathcal{M}^+(M)+\mathcal{M}^+(N)$.    
	\end{enumerate}
\end{lemma}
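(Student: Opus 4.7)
The plan is to verify each of the four properties pointwise almost everywhere, since $\mathcal{M}^{\pm}$ is defined pointwise as a combination of the eigenvalues of the argument with the scalar weights $\lambda(x), \Lambda(x)$. I would fix $x \in B_1$ at which $0 \leq \lambda(x) \leq \Lambda(x) < \infty$; this holds on a set of full measure under the integrability hypotheses $1/\lambda \in L^p$ and $\Lambda \in L^q$, and at such a point the problem reduces to the classical Pucci algebra with constant ellipticities $\lambda_0 = \lambda(x)$ and $\Lambda_0 = \Lambda(x)$.

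For (i), I would subtract the two definitions directly and combine the partial sums of eigenvalues to obtain
\[
\mathcal{M}^+(M)(x) - \mathcal{M}^-(M)(x) = (\Lambda(x)-\lambda(x)) \sum_i |e_i(M)| \geq 0.
\]
For (ii), I would use that the eigenvalues of $-M$ are $-e_i(M)$ up to reordering and that any zero eigenvalue contributes nothing to either operator, so matching the two partial sums gives $\mathcal{M}^+(-M)(x) = -\mathcal{M}^-(M)(x)$. For (iii), since $t \geq 0$ preserves the sign of each eigenvalue and the case $t=0$ is trivial, the factor $t$ can be pulled out of both partial sums.

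For (iv), the cleanest approach is through the variational representation
\[
\mathcal{M}^+(M)(x) = \sup \bigl\{ \tr(AM) : A \in \mathcal{S}^n,\; \lambda(x) I \leq A \leq \Lambda(x) I \bigr\},
\]
which is obtained by diagonalizing $M$ and optimizing $v_i^T A v_i \in [\lambda(x),\Lambda(x)]$ on each eigendirection of $M$. Subadditivity of a pointwise supremum immediately yields the upper bound $\mathcal{M}^+(M+N) \leq \mathcal{M}^+(M) + \mathcal{M}^+(N)$. The lower bound then follows by writing $M = (M+N) + (-N)$, applying the upper bound to this decomposition, and invoking (ii) to replace $\mathcal{M}^+(-N)$ with $-\mathcal{M}^-(N)$.

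The only point requiring some care is that $\Lambda(x)$ may take the value $+\infty$ (and $\lambda(x)$ may equal $0$) on exceptional sets, which a priori could create $0\cdot\infty$ ambiguities in the partial sums. I would resolve this by restricting to the full-measure set on which both $\lambda$ and $\Lambda$ are finite and positive, which is admissible since the assertions are understood a.e.\ in $B_1$. With this restriction in place, no case analysis is needed and the entire lemma reduces to elementary linear algebra on $\mathcal{S}^n$.
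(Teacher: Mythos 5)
Your proof is correct. The paper states this lemma without giving a proof, treating it as an immediate pointwise extension of the classical algebra of Pucci extremal operators (cf.\ Caffarelli--Cabr\'e); your argument is exactly the standard one, and your opening remark about reducing to the full-measure set where $0 < \lambda(x) \leq \Lambda(x) < \infty$ correctly addresses the only wrinkle introduced by the non-constant, merely $L^p$/$L^q$-integrable ellipticities. The variational representation $\mathcal{M}^+(M)(x) = \sup\{\tr(AM): \lambda(x) I \leq A \leq \Lambda(x) I\}$ you invoke for (iv) is the cleanest route to the subadditivity and superadditivity inequalities, and your derivation of the lower bound from the upper bound via $M = (M+N) + (-N)$ together with (ii) is the standard device; nothing further is required.
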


For later uses, we also define $C$-viscosity solutions when $\lambda$, $\Lambda$, and $f$ are continuous; see \cite{CC95, CIL92} for instance.
\begin{definition}[$C$-viscosity solutions]\label{def-vis}
	Let $\lambda, \Lambda, f \in C(B_1)$ with $0 \leq \lambda(x) \leq \Lambda(x)$ for $x \in B_1$. A function $u \in C(B_1)$ is a $C$-viscosity solution of $\mathcal{M}_{\lambda, \Lambda}^+(D^2u) \geq f$ in $B_1$, if for all $\varphi \in C^2(B_1)$ and point $x_0 \in B_1$ at which $u-\varphi$ has a local maximum, one has 
		\begin{equation*}
		\mathcal{M}_{\lambda(x_0), \Lambda(x_0)}^+(D^2\varphi(x_0)) \geq f(x_0).
	\end{equation*}
	In a similar way, a function $u \in C(B_1)$ is a $C$-viscosity solution of $\mathcal{M}_{\lambda, \Lambda}^-(D^2u) \geq f$ in $B_1$, if for all $\varphi \in C^2(B_1)$ and point $x_0 \in B_1$ at which $u-\varphi$ has a local minimum, one has 
	\begin{equation*}
		\mathcal{M}_{\lambda(x_0), \Lambda(x_0)}^-(D^2\varphi(x_0)) \leq f(x_0).
	\end{equation*}
\end{definition}

The following contact set $\Gamma^+$ will be used for the proof of ABP estimates.
\begin{definition}
For a function $u : \Omega \to \mathbb{R}$ and $r>0$, the \textit{upper contact set} is defined by
\begin{equation*}
	\begin{aligned}
		\Gamma^+(u)&=\Gamma^+(u, \Omega)=\{x \in \Omega \text{ : $\exists\, p \in \mathbb{R}^n$ such that $u(y) \leq u(x)+\langle p, y-x\rangle$, $\forall y \in \Omega$} \},\\
		\Gamma_r^+(u)&=\Gamma_r^+(u, \Omega)=\{x \in \Omega \text{ : $\exists \, p \in \overline{B_r(0)}$ such that $u(y) \leq u(x)+\langle p, y-x\rangle$, $\forall y \in \Omega$} \}.
	\end{aligned}
\end{equation*}
\end{definition}

For sets $A_1$, $A_2$, $\cdots$, we define
\begin{align*}
	\limsup_{j \to \infty}A_j:=\bigcap_{n=1}^{\infty}\bigcup_{k>n}A_k.
\end{align*}

\begin{lemma}[{\cite[Lemma A.1]{CCKS96}}]\label{lem-contactset}
	Let $u_j$, $j=1,2, \cdots$ be functions defined on sets $\Omega_j$, where $\Omega_j$ are open and increase to $\Omega$; that is $\Omega_j \subset \Omega_{j+1}$ and $\bigcup \Omega_j=\Omega$. Let $u_j$ converge uniformly to a continuous function $u$ on each $\Omega_m$. Then
	\begin{enumerate}[label=(\roman*)]
		\item $\limsup_{j \to \infty} \Gamma^+(u_j, \Omega_j) \subset \Gamma^+(u, \Omega)$.
		\item $\limsup_{j \to \infty} |\Gamma^+(u_j, \Omega_j)| \leq |\Gamma^+(u, \Omega)|$.
		\item $\limsup_{j \to \infty} \Gamma^+_r(u_j, \Omega_j) \subset \Gamma^+_r(u, \Omega)$.
	\end{enumerate}
\end{lemma}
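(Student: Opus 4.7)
The plan is to establish (i) directly from the definition, after which (ii) follows from reverse Fatou and (iii) is a cleaner variant of (i). Unpacking (i): if $x$ lies in $\limsup_{j \to \infty} \Gamma^+(u_j, \Omega_j)$, then $x \in \Gamma^+(u_{j_k}, \Omega_{j_k})$ along some subsequence $j_k \to \infty$, so for each $k$ there is a supporting vector $p_k \in \mathbb{R}^n$ with
\[
u_{j_k}(y) \leq u_{j_k}(x) + \langle p_k, y - x \rangle \qquad \text{for every } y \in \Omega_{j_k}.
\]
Because $x \in \Omega_{j_k} \subset \Omega$ and $\Omega$ is open, I can fix $r_0 > 0$ with $\overline{B_{r_0}(x)} \subset \Omega$; compactness together with $\Omega_j \nearrow \Omega$ gives $\overline{B_{r_0}(x)} \subset \Omega_m$ for some $m$, and in particular $\overline{B_{r_0}(x)} \subset \Omega_{j_k}$ for all $k$ sufficiently large.

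The main obstacle is to show $\{p_k\}$ is bounded, since once a subsequence converges the rest is routine. Testing the contact inequality at $y = x - r_0 p_k / |p_k|$ (when $p_k \neq 0$) and rearranging,
\[
r_0 |p_k| \leq u_{j_k}(x) - u_{j_k}\bigl(x - r_0 p_k/|p_k|\bigr) \leq 2 \sup_k \|u_{j_k}\|_{L^{\infty}(\overline{B_{r_0}(x)})},
\]
and the right-hand side is finite because $u_{j_k} \to u$ uniformly on $\Omega_m \supset \overline{B_{r_0}(x)}$. By Bolzano--Weierstrass some $p_{k_l} \to p \in \mathbb{R}^n$. For arbitrary $y \in \Omega$, I pick $m'$ with $y \in \Omega_{m'}$; uniform convergence on $\Omega_{\max(m, m')}$ then permits passage to the limit in the contact inequality, yielding $u(y) \leq u(x) + \langle p, y - x \rangle$. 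Hence $x \in \Gamma^+(u, \Omega)$.

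For (ii) I use that $\chi_{\limsup_j A_j} = \limsup_j \chi_{A_j}$ pointwise: since $\chi_{\Gamma^+(u_j, \Omega_j)} \leq \chi_\Omega$ and $\Omega$ (bounded in all applications here) has finite measure, reverse Fatou gives
\[
\limsup_{j \to \infty} |\Gamma^+(u_j, \Omega_j)| \leq \Bigl|\limsup_{j \to \infty} \Gamma^+(u_j, \Omega_j)\Bigr| \leq |\Gamma^+(u, \Omega)|,
\]
the second step by (i) and monotonicity of Lebesgue measure. Finally, (iii) is obtained by repeating the proof of (i) with the observation that each $p_k$ now lies in the compact set $\overline{B_r(0)}$ from the outset, so the delicate boundedness step is unnecessary; the extracted limit $p$ automatically belongs to $\overline{B_r(0)}$, which witnesses $x \in \Gamma_r^+(u, \Omega)$.
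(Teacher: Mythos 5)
Your proof is correct, and since the paper simply cites \cite[Lemma A.1]{CCKS96} without reproducing the argument, you have filled in a gap rather than deviated from a given proof; your argument is the standard one and matches what appears in that reference. Two minor remarks on precision rather than substance: in the boundedness estimate $r_0|p_k| \le u_{j_k}(x) - u_{j_k}(x - r_0 p_k/|p_k|)$, the supremum on the right should implicitly be taken only over $k$ large enough that $\overline{B_{r_0}(x)} \subset \Omega_{j_k}$ (otherwise $u_{j_k}$ need not even be defined there), which your earlier sentence already justifies; and for (ii) the reverse-Fatou step genuinely requires $|\Omega| < \infty$, which you correctly flag is automatic in this paper (where $\Omega = B_1$) but should be viewed as part of the hypothesis.
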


We finally state a version of the cube decomposition lemma, which is suitable for our purpose in \Cref{sec-localest}.

\begin{lemma}[{\cite[Lemma 9.23]{GT01}}]\label{lem-cube}
	Let $K_0$ be a cube in $\mathbb{R}^n$, $w \in L^1(K_0)$, and set
	\begin{equation*}
		D_k=\{x \in K_0 \, |\, w(x) \leq k\} \quad \text{for $k \in \mathbb{R}$}.
	\end{equation*}
	Suppose that there exist constants $\delta\in (0,1)$ and $C>0$ such that
	\begin{equation*}
		\sup_{K_0 \cap K_{3r}(z)}\,(w-k) \leq C,
	\end{equation*}
	whenever $k$ and $K=K_r(z) \subset K_0$ satisfy
	\begin{equation*}
		|D_k \cap K| \geq \delta|K|.
	\end{equation*}
	Then it follows that, for all $k$,
	\begin{equation*}
		\sup_{K_0} \, (w-k) \leq C\left(1+\frac{\log(|D_k|/|K_0|)}{\log \delta}\right).
	\end{equation*}
\end{lemma}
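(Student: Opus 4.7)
This is the classical iterated cube-decomposition lemma (a Calderón--Zygmund/Krylov--Safonov style abstract growth estimate), cited here from \cite[Lemma 9.23]{GT01}; presumably the authors will invoke it as a black box rather than reprove it. Still, the natural strategy, which I sketch below, is a dyadic stopping-time argument combined with iteration on the threshold $k$.

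The plan is as follows. Set $k_m := k + mC$ and $E_m := K_0 \setminus D_{k_m} = \{x \in K_0 : w(x) > k_m\}$, so that $E_0 \supseteq E_1 \supseteq \cdots$, and the desired estimate is equivalent to the implication $E_m \neq \emptyset \Longrightarrow |D_k| < \delta^{m-1}|K_0|$. The heart of the proof is the geometric decay
\[
|E_{m+1}| \leq \tilde\delta\,|E_m|
\]
for some $\tilde\delta = \tilde\delta(\delta,n) \in (0,1)$, valid as long as $E_m \neq K_0$. Iterating this from $m=0$, and choosing $m$ so that $\tilde\delta^m |E_0| < 1$, forces $E_m = \emptyset$, which is the desired logarithmic bound on $\sup_{K_0}(w-k)$.

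To establish the decay at a fixed level $k_m$, apply a dyadic stopping-time decomposition to $K_0$ relative to the density of $D_{k_m}$: bisect $K_0$ dyadically and select the maximal subcubes $K^{(i)} = K_{r_i}(z_i) \subset K_0$ on which $|D_{k_m} \cap K^{(i)}| \geq \delta\,|K^{(i)}|$. The hypothesis applied to each $K^{(i)}$ at level $k_m$ then yields $K_{3r_i}(z_i) \cap K_0 \subseteq D_{k_{m+1}}$, so each selected (tripled) cube removes a controlled portion of $E_m$. On the unselected remainder of $K_0$, the density of $D_{k_m}$ is below $\delta$ at every dyadic scale, and Lebesgue differentiation together with a Vitali-type covering assembles these local gains into the proportional global decay.

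The main obstacle is the mismatch between dyadic bisection and the tripling factor $K_r(z) \mapsto K_{3r}(z)$ appearing in the hypothesis, compounded by boundary effects when $K_{3r}(z) \not\subset K_0$. Handling it cleanly forces one to replace a naive dyadic partition with a Vitali/Besicovitch covering adapted to $K_0$ and to keep careful track of the loss near $\partial K_0$; these are the technical points that \cite[Lemma 9.23]{GT01} packages once and for all.
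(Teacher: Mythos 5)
The paper does not prove this lemma; it is quoted verbatim from \cite[Lemma 9.23]{GT01} and invoked as a black box, exactly as you anticipated. Your sketch captures the standard structure correctly: set $k_m = k + mC$, prove geometric decay $|E_{m+1}| \leq \tilde\delta |E_m|$ via a stopping-time cube decomposition, and iterate to get the logarithmic bound. One remark: the technical worry you raise at the end is overstated. The argument in \cite{GT01} (Lemma 9.22 feeding into 9.23) is purely dyadic and needs no Vitali or Besicovitch covering. One bisects $K_0$ and stops at the maximal dyadic subcubes $K$ with $|D_{k_m} \cap K| > \delta|K|$; these are pairwise disjoint, cover $D_{k_m}$ up to a null set by Lebesgue differentiation, and the parent $\tilde K$ of a dyadic subcube of side $s$ is automatically contained in the concentric tripled cube (the parent's center is within $s/2$ of $K$'s center and its half-side is $s$, which fits inside a cube of half-side $3s/2$). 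So the dyadic-versus-tripling mismatch you flag does not arise, and boundary effects are handled simply because both the hypothesis and the conclusion intersect $K_{3r}(z)$ with $K_0$. The rest of your plan is sound.
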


\subsection{Applications}
In this section, we present concrete examples of degenerate or singular equations in non-divergence form, which are contained in our framework.

\begin{enumerate}[label=(\roman*), wide, labelwidth=!, labelindent=0pt]
	\item (Issacs equations) {Issaces equations}, which naturally arise in probability theory \cite{FS89} (stochastic control and differential games), are given by	
	\begin{align*}
		\inf_{\alpha}\sup_{\beta}\left(A_{\alpha\beta}(x)D^2u(x)\right)=f \quad \text{in $B_1$},
	\end{align*}
	where $A_{\alpha \beta}(\cdot)$ (for any $\alpha$, $\beta$ contained in index sets) are matrices satisfying
	\begin{align*}
		\lambda(x) I_n \leq A_{\alpha \beta}(x) \leq \Lambda(x) I_n
	\end{align*}
	with $1/\lambda \in L^p(B_1)$ and $\Lambda \in L^q(B_1)$. We note that linear elliptic operators with ellipticity $\lambda$ and $\Lambda$, and the Pucci extremal operators $\mathcal{M}_{\lambda, \Lambda}^{\pm}$ can be understood as special cases of Issacs operators.

	\item (Monge-Amp\'ere equations)
	The {Monge-Amp\'ere} equation, which appears from the prescribed Gaussian curvature equation \cite{Fig17} (or ``Minkowski problem"), is a fully nonlinear, degenerate elliptic equation given by
	\begin{align*}
		\mathrm{det}\,D^2u=f \quad \text{in $B_1$}.
	\end{align*}
	It has important applications in convex geometry and optimal transportation. For simplicity, we consider an equation
	\begin{equation}\label{eq-mongeampere}
		G(D^2u):=\log \mathrm{det} \, D^2u=\log f.
	\end{equation}
	Then we have $G_{ij}=u^{ij}$, where $u^{ij}$ denote the inverse of the Hessian matrix $D^2u$. Thus, if we denote $\lambda$ and $\Lambda$ are ellipticity functions defined in \eqref{eq-ellipticity} for $u^{ij}$, then we observe that $1/\Lambda$ and $1/\lambda$ are ellipticity function for $D^2u$. Since
	\begin{equation*}
		\begin{aligned}
			&\text{$u$ is convex if and only if \eqref{eq-mongeampere} is degenerate elliptic and}\\
			&\text{$u$ is uniformly convex if and only if \eqref{eq-mongeampere} is uniformly elliptic},
		\end{aligned}
	\end{equation*}
	the integrability assumption on $1/\lambda$ and $\Lambda$ corresponds to some ``intermediate'' convexity on $u$.

	\item (Linear equations with particular degeneracy/singularity) In \cite{DS09}, the authors employed the partial Legendre transform to convert the two-dimensional Monge-Amp\'ere equation
	\begin{equation*}
		\mathrm{det} \, D^2u=|x|^{\alpha} \quad \text{for $\alpha>0$}
	\end{equation*}
	into the linear equation
	\begin{equation*}
		v_{xx}+|x|^{\alpha}v_{yy}=0 \quad \text{in $B_1$}.
	\end{equation*}
	Then the pair $(p,q)$ corresponding to the ellipticity functions given by $\lambda(x, y)=|x|^{\alpha}$ and $\Lambda(x, y)=1$ satisfy the stuctural condition \eqref{eq-assumption1} when $\alpha<1/2$.

	Moreover, a similar type of equation can be found in an extension problem related to the fractional Laplacian \cite{CS07}. To be precise, the solution $u$ of the degenerate/singular equations
	\begin{equation*} 
		\left\{ \begin{aligned}
			\Delta_xu+z^{\frac{2s-1}{s}}u_{zz}&=0 &&\text{in $\mathbb{R}^n \times [0, \infty)$}\\
			u&=f &&\text{on $\mathbb{R}^n \times \{0\}$}
		\end{aligned}\right.
	\end{equation*}
	satisfies 
	\begin{equation*}
		(-\Delta)^sf(x)=-C(n,s)u_z(x, 0)
	\end{equation*}
	for $s \in (0,1)$. It is easy to check that this example lies in our setting when $(n+1)/(2n+3)<s<(n+1)/(2n+1)$. We refer to \cite{KLY24} for related examples.
\end{enumerate}

\section{ABP estimates}\label{sec-ABP}
In order to prove \Cref{thm-ABP}, we are going to provide a version of \cite[Proposition 2.12]{CCKS96} (ABP estimates for continuous coefficients and $C$-viscosity solutions) and \cite[Lemma 3.1]{CCKS96} (existence of $L^n$-strong super/subsolutions). It is noteworthy that an additional approximation technique is required to control ellipticity functions $\lambda$ and $\Lambda$, which are not necessarily bounded in $L^{\infty}$.

\begin{lemma}\label{lem-ABPreduction}
	Let $f \in C(B_1)$. Assume $\lambda, \Lambda : B_1 \to (0,\infty)$  such that $1/\lambda, \Lambda \in C(\overline{B_1})$. Moreover, suppose that $u \in C(\overline{B_1})$ is a $C$-viscosity solution of 
	\begin{align*}
		\mathcal{M}_{\lambda, \Lambda}^+(D^2u) \geq f \quad \text{in $B_1$}.
	\end{align*}
	Then there exists a universal constant $C=C(n)>0$ such that
	\begin{align*}
	\sup_{B_1} u \leq \sup_{\partial B_1} u^++C \left(\int_{\Gamma^+(u^+)} \left(\frac{f^-(x)}{\lambda(x)}\right)^n \,\mathrm{d}x\right)^{1/n}.
\end{align*} 
\end{lemma}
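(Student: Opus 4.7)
The plan is to adapt the classical concave-envelope proof of the ABP estimate to the $C$-viscosity setting via sup-convolution regularization, while keeping the pointwise ellipticity $\lambda(x)$ inside the integral rather than pulling out a constant $\lambda_0^{-1}$. The continuity hypothesis $1/\lambda,\Lambda\in C(\overline{B_1})$ provides uniform bounds $0<\lambda_0\le\lambda\le\Lambda\le\Lambda_0<\infty$ on $\overline{B_1}$, which legitimize the regularization and the small coefficient perturbation that will accompany it. We may assume $M:=\sup_{B_1}u-\sup_{\partial B_1}u^+>0$.

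For $\epsilon>0$, I would introduce the sup-convolution $u^{\epsilon}(x):=\sup_{y\in\overline{B_1}}\{u(y)-|x-y|^{2}/(2\epsilon)\}$. Standard properties give that $u^\epsilon$ is continuous, semiconvex (hence twice differentiable a.e.\ by Alexandrov's theorem), and $u^\epsilon\to u$ uniformly on $\overline{B_1}$ as $\epsilon\to 0$. Using the continuity of $\lambda,\Lambda,f$ together with the standard inf/sup-over-small-balls construction, one produces perturbed coefficients $\lambda_\epsilon,\Lambda_\epsilon,f_\epsilon$ that converge uniformly to $\lambda,\Lambda,f$ and such that $u^\epsilon$ is a $C$-viscosity—hence, by semiconvexity, an $L^{n}$-strong—subsolution of $\mathcal{M}^{+}_{\lambda_\epsilon,\Lambda_\epsilon}(D^{2}u^{\epsilon})\ge f_{\epsilon}$ a.e.\ in $B_{1-r(\epsilon)}$, where $r(\epsilon)\to 0$.

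Let $\Gamma^{\epsilon}$ denote the concave envelope of $(u^{\epsilon})^+$ and set $M_\epsilon:=\sup u^\epsilon-\sup_{\partial B_{1-r(\epsilon)}}(u^\epsilon)^+$. At a.e.\ point $x\in\Gamma^{+}(u^\epsilon)$ where $u^\epsilon$ is twice differentiable, the supporting hyperplane and a Taylor expansion force $D^{2}u^{\epsilon}(x)\le 0$, so all its eigenvalues are nonpositive and the Pucci operator collapses to
\begin{equation*}
\mathcal{M}^{+}_{\lambda_\epsilon,\Lambda_\epsilon}(D^{2}u^\epsilon(x))=\lambda_\epsilon(x)\,\mathrm{tr}(D^{2}u^\epsilon(x))\ge -f^{-}_\epsilon(x).
\end{equation*}
Hence $\mathrm{tr}(-D^{2}u^\epsilon(x))\le f^{-}_\epsilon(x)/\lambda_\epsilon(x)$, and AM--GM on the nonnegative eigenvalues of $-D^{2}u^\epsilon$ yields $\det(-D^{2}u^\epsilon(x))\le (f^{-}_\epsilon(x)/(n\lambda_\epsilon(x)))^{n}$. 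Since $u^\epsilon\le\Gamma^\epsilon$ with equality at the contact point, one has $0\le -D^{2}\Gamma^\epsilon(x)\le -D^{2}u^\epsilon(x)$ in the positive-semidefinite order, so the same bound holds for $\det(-D^{2}\Gamma^\epsilon)$. Combining with the standard normal-map inclusion $B_{c M_\epsilon}(0)\subset\partial\Gamma^{\epsilon}(\Gamma^{+}(u^\epsilon))$ and the area formula gives
\begin{equation*}
c_n\,M_\epsilon^{\,n}\le\int_{\Gamma^{+}(u^\epsilon)}\Bigl(\frac{f^{-}_\epsilon(x)}{\lambda_\epsilon(x)}\Bigr)^{n}\,\mathrm{d}x.
\end{equation*}
Letting $\epsilon\to 0$, $M_\epsilon\to M$; \Cref{lem-contactset} gives $\limsup_\epsilon\Gamma^{+}(u^\epsilon)\subset\Gamma^{+}(u^+)$ with control on measure; and the uniform convergence of $\lambda_\epsilon,f_\epsilon$ together with the $L^\infty$ bounds from the $C(\overline{B_1})$ hypotheses let one pass to the limit in the integral by dominated convergence, producing the desired estimate.

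I expect the main obstacle to be the verification in the second step that the sup-convolution of a $C$-viscosity subsolution of a Pucci equation with $x$-dependent continuous coefficients satisfies the shifted inequality pointwise a.e.\ with coefficients $(\lambda_\epsilon,\Lambda_\epsilon,f_\epsilon)$ converging uniformly to $(\lambda,\Lambda,f)$; this is where the $C(\overline{B_1})$ hypothesis is essentially used, and it is what makes the statement a refinement of \cite[Lemma~3.1]{CCKS96}. The remaining ingredients—the eigenvalue-sign collapse on the contact set (which is exactly what allows $\lambda(x)$ to appear inside the integral rather than $\lambda_0$), the AM--GM step, the area formula, and passage to the limit via \Cref{lem-contactset}—are classical.
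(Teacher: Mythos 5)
Your proposal is correct and follows essentially the same route as the paper's proof: the paper first establishes the estimate for $u \in C^2(B_1)\cap C(\overline{B_1})$ via the gradient map on $\Gamma^+_r(u^+)$ and the change of variables $\int_{B_r}\mathrm{d}p \le \int_{\Gamma^+_r(u^+)}|\det D^2u|\,\mathrm{d}x$, using the identical key observation that $D^2u\le 0$ on the contact set collapses $\mathcal{M}^+_{\lambda,\Lambda}(D^2u)$ to $\lambda(x)\tr D^2u\ge f(x)$ so that only $\lambda(x)$ enters the bound, and then defers the general case to the standard approximation in \cite[Appendix~A]{CCKS96}. Your sup-convolution regularization with perturbed coefficients $\lambda_\epsilon,\Lambda_\epsilon,f_\epsilon$ is precisely an explicit implementation of that deferred approximation step, so the two arguments are the same in substance.
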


\begin{proof}
	We will follow the proof provided in \cite[Appendix A]{CCKS96}. We begin by assuming that $u \in C^2(B_1) \cap C(\overline{B_1})$ and later remove this assumption via approximations. We set
	\begin{equation}
		r_0 = \frac{ \sup_{B_1} u - \sup _{\partial B_1} u^{+} }{2}. 
	\end{equation}
	For $r < r_0$, let $p \in B_r$ and $\hat{x} \in \overline{B_1}$ be a maximum point of $u(\cdot) - \langle p,\cdot\rangle$ so that 
	\begin{equation*}
		u(\hat{x}) - \langle p,\hat{x} \rangle \ge u(x) - \langle p,x \rangle \text{ or } u(x)-u(\hat{x}) \le \langle p,x-\hat{x} \rangle
	\end{equation*}
	for any $x \in \overline{B_1} $. It follows that 
	\begin{align*}
		\sup_{B_1} u-u(\hat{x})  \le 2|p| \le 2r < 2r_0   = \sup _{B_1} u - \sup_{ \partial B_1 } u^+
	\end{align*}   
	and then $2(r_0- r) + \sup_{\partial B_1} u^+ < u(\hat{x}). $ In particular, $\hat{x} \in B_1 $ and $ u ( \hat{x}) >0 $. Since $ Du(\hat{x})=p $ and $D^2u (\hat{x}) \le 0$, we conclude that for $0 <r < r_0$, $\Gamma^+_r(u^+)$ is a compact subset of $B_1$ and
	\begin{equation}
		\label{eq:A.4}
		B_r=B_r(0) = Du( \Gamma^+_r(u^+) ) \text{ and } D^2u(x) \ge 0\text{ on } \Gamma^+_r(u^+) \subset \{ u > 0 \} .
	\end{equation}
	We now employ the change of variables $p = Du(x)$ to have
	\begin{equation}\label{eq:A.5} 
		\int_{B_r}  \,\mathrm{d} p \le \int_{\Gamma^+_r(u^+) }    |\det D^2u| \,\mathrm{d} x 
		\le \int_{ \Gamma^+_r(u^+) }    \left(  \frac{ -\tr D^2u }{n} \right) ^n \,\mathrm{d} x . 
	\end{equation}
	Since $ \mathcal{M}_{\lambda, \Lambda}^+(D^2u)(x) \geq f(x)  $ and $D^2u \le 0 $ on $\Gamma^+(u^+)$, we have
	\begin{equation*}
		\lambda \tr(D^2u) \ge f(x) \text{ on } \Gamma^+(u^+) 
	\end{equation*}
	and \eqref{eq:A.5} implies
	\begin{equation*} 
		r^n|B_1|=\int_{B_r}    \,\mathrm{d} p  \le \frac{1}{n^n}  \int_{ \Gamma^+(u^+)}     \left(  \frac{f^-(x)}{\lambda(x)} \right) ^n \,\mathrm{d} x.
	\end{equation*}
	Since $\lambda$, $\Lambda$, and $f$ are continuous, the general case follows from the standard approximation argument as in \cite[Appendix A]{CCKS96}.
\end{proof}

\begin{lemma}\label{lem-existence}
	Let $f\in L^{n}(B_1)$, $\psi \in C(\partial B_1)$. Assume $1/\lambda, \Lambda \in C(\overline{B_1})$. Then there exist $L^n$-strong solutions $u, v \in C(\overline{B_1}) \cap W^{2, n}_{\mathrm{loc}}(B_1)$ of 
	\begin{align*}
		\mathcal{M}^+_{\lambda, \Lambda}(D^2u)(x) \leq f(x) \quad \text{in $B_1$}
	\end{align*}
and
	\begin{align*}
	\mathcal{M}^-_{\lambda, \Lambda}(D^2v)(x) \geq f(x)\quad \text{in $B_1$}
\end{align*}
 such that $u=v=\psi$ on $B_1$. Moreover, $u$, $v$ satisfy the estimate
\begin{align}\label{eq-abplike}
	\|u\|_{L^{\infty}(B_1)}, \|v\|_{L^{\infty}(B_1)} \leq \|\psi\|_{L^{\infty}(\partial B_1)}+C\|f/\lambda\|_{L^n(B_1)}.
\end{align}
\end{lemma}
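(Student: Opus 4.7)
The plan is to reduce to the uniformly elliptic setting. Since $1/\lambda,\Lambda\in C(\overline{B_1})$, both quantities are bounded on $\overline{B_1}$, so $\mathcal{M}^+_{\lambda,\Lambda}$ is a convex, uniformly elliptic operator with continuous coefficients (with ellipticity constants $\lambda_0:=1/\|1/\lambda\|_{L^\infty(B_1)}$ and $\Lambda_0:=\|\Lambda\|_{L^\infty(B_1)}$). I will first construct $u$ by approximation and compactness and then recover $v$ via the reflection $v=-w$ together with \Cref{lem-pucci}~(ii), where $w$ is produced by applying the same procedure with right-hand side $-f$ and boundary data $-\psi$.

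For the construction of $u$, choose $f_k\in C(\overline{B_1})$ with $f_k\to f$ in $L^n(B_1)$. For each $k$, classical uniformly elliptic theory (for instance \cite[Lemma 3.1]{CCKS96}) provides an $L^n$-strong solution $u_k\in C(\overline{B_1})\cap W^{2,n}_{\mathrm{loc}}(B_1)$ of $\mathcal{M}^+_{\lambda,\Lambda}(D^2u_k)=f_k$ in $B_1$ with $u_k=\psi$ on $\partial B_1$. By \Cref{lem-pucci}~(iv), the difference $u_k-u_m$ satisfies $\mathcal{M}^+_{\lambda,\Lambda}(D^2(u_k-u_m))\geq f_k-f_m$ (and symmetrically for $u_m-u_k$), so \Cref{lem-ABPreduction} gives
\begin{equation*}
\|u_k-u_m\|_{L^\infty(B_1)}\leq C\|(f_k-f_m)/\lambda\|_{L^n(B_1)}.
\end{equation*}
Hence $\{u_k\}$ is Cauchy in $C(\overline{B_1})$ and converges uniformly to some $u\in C(\overline{B_1})$ with $u=\psi$ on $\partial B_1$; passing to the limit in the ABP bound for $u_k$ itself delivers \eqref{eq-abplike}.

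To upgrade $u$ to an $L^n$-strong solution of $\mathcal{M}^+_{\lambda,\Lambda}(D^2u)\leq f$, I invoke Caffarelli's interior $W^{2,n}$ estimates to bound $(u_k)$ uniformly in $W^{2,n}_{\mathrm{loc}}(B_1)$, so after extracting a subsequence $u_k\rightharpoonup u$ weakly in $W^{2,n}_{\mathrm{loc}}(B_1)$. Since
\begin{equation*}
\mathcal{M}^+_{\lambda,\Lambda}(M)(x)=\sup\{\tr(AM):\lambda(x)I\leq A\leq\Lambda(x)I\}
\end{equation*}
is convex in $M$ for each fixed $x$ and Carath\'eodory in $(x,M)$, weak lower semicontinuity of convex integrands (Tonelli--Serrin) yields
\begin{equation*}
\int_B\mathcal{M}^+_{\lambda,\Lambda}(D^2u)(x)\sd x\leq\liminf_{k\to\infty}\int_B\mathcal{M}^+_{\lambda,\Lambda}(D^2u_k)(x)\sd x=\int_B f(x)\sd x
\end{equation*}
for every ball $B\Subset B_1$, whereupon Lebesgue differentiation forces $\mathcal{M}^+_{\lambda,\Lambda}(D^2u)\leq f$ a.e. I expect this last step to be the main obstacle: weak convergence of $D^2u_k$ does not license pointwise passage through the nonlinearity, and it is precisely the convexity of $\mathcal{M}^+$ in $M$ that supplies the one-sided inequality required (and it is why only a sub/supersolution, not an equation, survives in the limit).
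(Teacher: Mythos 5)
Your proposal is essentially a re-derivation of \cite[Lemma 3.1]{CCKS96}, which the paper simply cites. The paper's entire argument is: the existence comes from \cite[Lemma 3.1]{CCKS96} (applied with the constant ellipticity bounds $\lambda_0=\min_{\overline{B_1}}\lambda$, $\Lambda_0=\max_{\overline{B_1}}\Lambda$, then using that $\mathcal{M}^+_{\lambda(x),\Lambda(x)}(M)\leq\mathcal{M}^+_{\lambda_0,\Lambda_0}(M)$ because $\mathcal{M}^+$ is monotone increasing in $\Lambda$ and decreasing in $\lambda$), and the refined $L^\infty$ bound with $\|f/\lambda\|_{L^n}$ follows from \Cref{lem-ABPreduction} applied to the approximating sequence underlying that construction. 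Your longer route (smooth $f_k$, Cauchy in $C(\overline{B_1})$ via ABP, interior $W^{2,p}$ bounds, weak convergence of Hessians, convexity of $\mathcal{M}^+$ giving the one-sided inequality in the limit) is precisely how \cite[Lemma 3.1]{CCKS96} is proved, so the mathematics is sound and you have correctly isolated the crucial role of convexity of $\mathcal{M}^+$.

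Two precision issues, however. First, you cite \cite[Lemma 3.1]{CCKS96} as providing $L^n$-strong solutions $u_k$ of the \emph{equation} $\mathcal{M}^+_{\lambda,\Lambda}(D^2u_k)=f_k$. That lemma only produces one-sided inequalities ($\mathcal{M}^+(D^2u)\leq f$, $\mathcal{M}^-(D^2v)\geq f$) and is stated for \emph{constant} ellipticity. Your Cauchy argument genuinely needs the equality $\mathcal{M}^+(D^2u_k)=f_k$ (to get $\mathcal{M}^+(D^2(u_k-u_m))\geq\mathcal{M}^+(D^2u_k)-\mathcal{M}^+(D^2u_m)=f_k-f_m$ from \Cref{lem-pucci}~(iv)), so you should instead invoke solvability of the Dirichlet problem for the convex uniformly elliptic operator $\mathcal{M}^+_{\lambda(\cdot),\Lambda(\cdot)}$ with continuous data (Perron's method together with Caffarelli's $W^{2,p}$ theory, or Evans--Krylov after mollifying the coefficients). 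Second, the ``Tonelli--Serrin'' weak lower semicontinuity step is applied to a convex integrand that is not bounded below, which is a point that needs care; the cleanest fix is Mazur's lemma, passing to strongly convergent convex combinations of $D^2u_{k}$ and $f_k$ and using convexity pointwise. Neither issue is fatal, but as written the proposal leans on a citation that does not say what you need it to say.
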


\begin{proof}
	The existence of a strong solution follows from \cite[Lemma 3.1]{CCKS96}. The $L^{\infty}$-estimate is a consequence of \Cref{lem-ABPreduction} together with the approximation.
\end{proof}

We are now ready to prove the first main theorem [\Cref{thm-ABP}].
\begin{proof}[Proof of \Cref{thm-ABP}]
	We employ several regularization techniques; more precisely, we approximate the ellipticity $\lambda$, $\Lambda$, and then the forcing term $f$. For simplicity, we may omit ``a.e." if no confusion occurs.
	\begin{enumerate}[label=(\roman*), wide, labelwidth=!, labelindent=0pt]
		\item (Approximation of $\lambda$, $\Lambda$) We first let 
		\begin{align*}
			\widetilde{\lambda}_j:=\left(\lambda \lor j \right)\land j^{-1} \quad \text{and} \quad \widetilde{\Lambda}_j:=\left(\Lambda \lor j \right)\land j^{-1}
		\end{align*}
		so that
		\begin{align*}
			\|1/\lambda_j-1/\widetilde{\lambda}_j\|_p \to 0, \quad \|\Lambda_j-\widetilde{\Lambda}_j\|_q \to 0, \quad \text{and} \quad
			j^{-1} \leq \widetilde{\lambda}_j \leq \widetilde{\Lambda}_j \leq j.
		\end{align*}
		Since $C_c(B_1)$ is a dense subspace of $L^p(B_1)$ for any $p \in [1, \infty)$, there exist two sequences $\{\lambda_j\}_{j=1}^{\infty} \subset C_c(B_1)$ and $\{\Lambda_j\}_{j=1}^{\infty} \subset C_c(B_1)$ such that
		\begin{align*}
			\|1/\lambda_j-1/\widetilde{\lambda}_j\|_{p} <j^{-1} \quad \text{and} \quad \|\Lambda_j-\widetilde{\Lambda}_j\|_{q} <j^{-1}.
		\end{align*}
		Here we may assume $\lambda_j \in [j^{-1}, j]$ and $\lambda_j \leq \widetilde{\lambda}_j$ by replacing $\lambda_j$ with $\left(\lambda_j \lor j \right)\land j^{-1}$ and $\lambda_j \land \widetilde{\lambda}_j$, if necessary. A similar argument also holds for $\Lambda_j$. Hence, it follows that $\lambda_j, \Lambda_j \in C(\overline{B_1})$, $j^{-1} \leq \lambda_j \leq \Lambda_j \leq j$,
		\begin{align}\label{eq-limit0}
			\|1/\lambda_j-1/\lambda\|_p \to 0, \quad \text{and} \quad \|\Lambda_j-\Lambda\|_q \to 0.
		\end{align}
		We now would like to find the inequality satisfied by $u$, in terms of Pucci extremal operators with `good' ellipticity $\lambda_j$ and $\Lambda_j$. Indeed, since  $u \in W^{2, \theta}(B_1)$ satisfies
		\begin{align*}
			\mathcal{M}_{\lambda, \Lambda}^+(D^2u)=\Lambda(x) \sum_{e_i>0}e_i(D^2u(x))+\lambda(x)\sum_{e_i<0}e_i(D^2u(x)) \geq f(x),
		\end{align*}
		we observe that
		\begin{align*}
			\mathcal{M}_{\lambda_{j}, \Lambda_{j}}^+(D^2u) &= \Lambda_{j}\sum_{e_i>0}e_i(D^2u)+\lambda_{j}\sum_{e_i<0}e_i(D^2u) \\
			&=\Lambda \sum_{e_i>0}e_i(D^2u)+(\Lambda_{j}-\Lambda) \sum_{e_i>0}e_i(D^2u)\\
			& +\lambda\sum_{e_i<0}e_i(D^2u)+(\lambda_{j}-\lambda)\sum_{e_i<0}e_i(D^2u)=:f_j.
		\end{align*}
		By recalling that $f \in L^{\tau}(B_1)$, $\Lambda \in L^q(B_1)$, and $D^2u \in L^{\theta}(B_1)$, it turns out that
		\begin{equation*}
			f_j=f+(\Lambda_{j}-\Lambda) \sum_{e_i>0}e_i(D^2u)+(\lambda_{j}-\lambda)\sum_{e_i<0}e_i(D^2u) \in L^{n}(B_1).
		\end{equation*}

		\item (Approximation of $f_j$) For fixed $j \in \mathbb{N}$, let $\{f_{j, k}\}_{k=1}^{\infty} \subset C^{\infty}(B_1)$ be a sequence of smooth functions such that 
		\begin{align}\label{eq-limit1}
			\|f_{j, k}-f_j\|_{L^n(B_1)} \to 0 \quad \text{as $k \to \infty$}.
		\end{align}
		Then we let $\psi_{j, k} \in W^{2, n}_{\text{loc}}(B_1) \cap C(\overline{B_1})$ solve
	\begin{equation*} 
		\left\{ \begin{aligned}
			\mathcal{M}_{\lambda_j, \Lambda_j}^-(D^2\psi_{j, k})& \geq f_{j, k}-f_{j} &&\text{in $B_1$}\\
			\psi_{j, k}&=0 &&\text{on $\partial B_1$},
		\end{aligned}\right.
	\end{equation*}
whose existence is guaranteed by \Cref{lem-existence}. From the estimate \eqref{eq-abplike},
\begin{align*}
		\|\psi_{j, k}\|_{L^{\infty}(B_1)} \leq C\|(f_{j, k}-f_j)/\lambda_j\|_{L^n(B_1)},
\end{align*}
 where the constant $C>0$ is independent of $k\in \mathbb{N}$. Therefore, it immediately follows that
\begin{align}\label{eq-limit2}
	\|\psi_{j, k}\|_{L^{\infty}(B_1)} \to 0 \quad \text{as $k \to \infty$}.
\end{align}

\item (Conclusion; ABP estimates)
	If we set $w:=u+\psi_{j, k}-\|\psi_{j, k}\|_{L^{\infty}(B_1)}$, then we observe that
	\begin{align*}
		\mathcal{M}_{\lambda_{j}, \Lambda_{j}}^+(D^2w) &\geq  \mathcal{M}_{\lambda_{j}, \Lambda_{j}}^+(D^2u)+\mathcal{M}_{\lambda_{j}, \Lambda_{j}}^-(D^2\psi_{j, k})\\
		&\geq f_j+(f_{j, k}-f_j)=f_{j, k}.
	\end{align*}
	Since $\lambda_j$, $\Lambda_j$, $f_{j, k}$ are regularized enough so that \Cref{lem-ABPreduction} is applicable, we have
	\begin{align*}
			\sup_{B_1} w \leq \sup_{\partial B_1} w^++C \left(\int_{\Gamma^+(w^+)} \left(\frac{f_{j, k}^-(x)}{\lambda_j(x)}\right)^n \, \mathrm{d} x\right)^{1/n}.
	\end{align*}
	By letting $k \to \infty$ together with \eqref{eq-limit1}, \eqref{eq-limit2}, and \Cref{lem-contactset}, we deduce
	\begin{align*}
		\sup_{B_1} u \leq \sup_{\partial B_1} u^++C \left(\int_{ \Gamma^+(u^+)} \left(\frac{f_{j}^-(x)}{\lambda_j(x)}\right)^n \, \mathrm{d} x\right)^{1/n}.
	\end{align*}
	Moreover, by applying H\"older inequality, we obtain
		\begin{align*}
			\left\|\frac{f_j^-}{\lambda_j}-\frac{f^-}{\lambda}\right\|_n&\leq
			\left\|\frac{f_j}{\lambda_j}-\frac{f}{\lambda}\right\|_n\\
			&\leq \left\|\left(\frac{1}{\lambda}-\frac{1}{\lambda_j}\right)\lambda \sum_{e_i<0}e_i(D^2u)\right\|_n+\left\|\frac{1}{\lambda_j}(\Lambda_{j}-\Lambda)\sum_{e_i>0}e_i(D^2u)\right\|_n+\left\|\frac{f}{\lambda_j}-\frac{f}{\lambda} \right\|_n \\
			&\leq \|1/\lambda-1/\lambda_j\|_p\|\Lambda\|_q\|D^2u\|_{\theta}+\|1/\lambda_j\|_p\|\Lambda_j-\Lambda\|_q\|D^2u\|_{\theta}+\|1/\lambda_j-1/\lambda\|_p\|f\|_{\tau}.
		\end{align*}
	Therefore, by passing the limit $j \to \infty$ together with \eqref{eq-limit0}, we finally conclude that
	\begin{align*}
	\sup_{B_1} u \leq \sup_{\partial B_1} u^++C \left(\int_{\Gamma^+(u^+)} \left(\frac{f^-(x)}{\lambda(x)}\right)^n \, \mathrm{d} x\right)^{1/n}
	\end{align*}
	as desired.
	\end{enumerate}
\end{proof}

\begin{corollary}\label{rmk-abp}
	Let $f \in L^{\tau}(B_1)$. Suppose that $u \in W^{2, \theta}(B_1)$ is an $L^{\theta}$-strong solution of 
	\begin{align*}
		\mathcal{M}_{\lambda, \Lambda}^-(D^2u) \leq f \quad \text{in $B_1$}.
	\end{align*}
	Then there exists a universal constant $C=C(n)>0$ such that
	\begin{align*}
		\sup_{B_1} u^- \leq \sup_{\partial B_1} u^-+C \left(\int_{\Gamma^+(u^-)} \left(\frac{f^+(x)}{\lambda(x)}\right)^n \,\mathrm{d}x\right)^{1/n}.
	\end{align*} 
\end{corollary}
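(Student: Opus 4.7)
The plan is to reduce the statement to Theorem 1.1 by exploiting the symmetry $\mathcal{M}^-(M) = -\mathcal{M}^+(-M)$ recorded in Lemma 2.4(ii). This is the same trick used to pass between the two inequalities in the classical Pucci setting, and nothing in the argument is sensitive to whether $\lambda,\Lambda$ are bounded away from zero or infinity.

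Concretely, set $v := -u$. Then $v \in W^{2,\theta}(B_1)$ and $D^2 v = -D^2 u$ almost everywhere, so by Lemma 2.4(ii),
\begin{equation*}
\mathcal{M}_{\lambda,\Lambda}^+(D^2 v) = \mathcal{M}_{\lambda,\Lambda}^+(-D^2 u) = -\mathcal{M}_{\lambda,\Lambda}^-(D^2 u) \geq -f \quad \text{a.e.\ in } B_1.
\end{equation*}
Thus $v$ is an $L^\theta$-strong solution of $\mathcal{M}_{\lambda,\Lambda}^+(D^2 v) \geq \tilde f$ in $B_1$ with $\tilde f := -f \in L^\tau(B_1)$, and Theorem 1.1 applies to $v$.

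Theorem 1.1 gives
\begin{equation*}
\sup_{B_1} v \leq \sup_{\partial B_1} v^+ + C\left(\int_{\Gamma^+(v^+)} \left(\frac{\tilde f^-(x)}{\lambda(x)}\right)^n \sd x\right)^{1/n}.
\end{equation*}
To finish, I would observe the three elementary identities $v^+ = (-u)^+ = u^-$, $\tilde f^- = (-f)^- = f^+$, and consequently $\Gamma^+(v^+) = \Gamma^+(u^-)$. For the left-hand side, note that if $\sup_{B_1} u^- > 0$ then $\inf_{B_1} u < 0$, in which case $\sup_{B_1} u^- = -\inf_{B_1} u = \sup_{B_1} v$, so the inequality obtained above already bounds $\sup_{B_1} u^-$; if $\sup_{B_1} u^- = 0$ the claim is trivial since the right-hand side is nonnegative. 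Substituting these identifications yields exactly the inequality in the corollary.

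There is no substantive obstacle here: once Theorem 1.1 is in hand, the corollary is a one-line duality argument. The only point worth stating carefully is the replacement of $\sup_{B_1} v$ by $\sup_{B_1} u^-$, which I would handle by the case split above (equivalently, by noting $\sup_{B_1} u^- = (\sup_{B_1} v)^+$ and using that the right-hand side of the ABP bound is nonnegative).
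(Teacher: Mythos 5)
Your proof is correct and is essentially the argument the paper gives: the paper's own proof of \Cref{rmk-abp} simply says to replace $u$ by $-u$ in (the statement of) \Cref{thm-ABP}, which is exactly the duality $\mathcal{M}^-(M)=-\mathcal{M}^+(-M)$ you invoke from \Cref{lem-pucci}(ii). Your care in passing from $\sup_{B_1} v$ to $\sup_{B_1} u^- = (\sup_{B_1} v)^+$ is a minor but welcome tidying-up of a point the paper leaves implicit.
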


\begin{proof}
	It immediately follows by considering $-u$ instead of $u$ in the proof of \Cref{thm-ABP}.
\end{proof}

\begin{corollary}
	Let $\lambda, \Lambda, f \in C(B_1)$ with $0 \leq \lambda(x) \leq \Lambda(x)$ for $x \in B_1$. Suppose that $u \in W^{2, \theta}_{\mathrm{loc}}(B_1)$ is an $L^{\theta}$-strong solution of $\mathcal{M}_{\lambda, \Lambda}^+(D^2u) \geq f$ in $B_1$. Then $u$ is also a $C$-viscosity solution of $\mathcal{M}_{\lambda, \Lambda}^+(D^2u) \geq f$ in $B_1$.
\end{corollary}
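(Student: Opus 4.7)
The plan is to argue by contradiction, using \Cref{thm-ABP} as a weak-maximum-principle substitute that turns the a.e.\ strong inequality into the pointwise statement required for viscosity solutions. Suppose, for the sake of contradiction, that some $\varphi \in C^2(B_1)$ touches $u$ from above at a point $x_0 \in B_1$, yet $\mathcal{M}^+_{\lambda(x_0),\Lambda(x_0)}(D^2\varphi(x_0)) < f(x_0) - 2\epsilon_0$ for some $\epsilon_0 > 0$.

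First, I would upgrade the local maximum to a strict one by replacing $\varphi(x)$ with $\varphi(x) + |x-x_0|^4$; the quartic perturbation vanishes together with its first and second derivatives at $x_0$, so the contradiction hypothesis is preserved, while $u-\varphi$ now attains a strict local maximum there. Next, using continuity of $\lambda$, $\Lambda$, $f$, $D^2\varphi$, together with the Sobolev embedding $W^{2,\theta}_{\mathrm{loc}} \hookrightarrow C$ (valid since $\theta \geq n$ so $u$ is continuous), I would choose $r>0$ small enough that $\overline{B_r(x_0)} \Subset B_1$, the inequality $\mathcal{M}^+_{\lambda(x),\Lambda(x)}(D^2\varphi(x)) \leq f(x) - \epsilon_0$ persists throughout $\overline{B_r(x_0)}$, and $M := \sup_{\partial B_r(x_0)}(u-\varphi) < m := (u-\varphi)(x_0)$.

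The subadditivity of $\mathcal{M}^+$ from \Cref{lem-pucci}(iv), combined with (ii), applied to the decomposition $D^2u = D^2(u-\varphi) + D^2\varphi$, then delivers the key a.e.\ inequality
\[
\mathcal{M}^+_{\lambda,\Lambda}(D^2(u-\varphi)) \geq \mathcal{M}^+_{\lambda,\Lambda}(D^2u) - \mathcal{M}^+_{\lambda,\Lambda}(D^2\varphi) \geq \epsilon_0 \quad \text{in } B_r(x_0).
\]
Setting $W := u - \varphi - \tfrac{m+M}{2}$, one has $W(x_0) = \tfrac{m-M}{2} > 0$ while $W^+ \equiv 0$ on $\partial B_r(x_0)$. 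Applying \Cref{thm-ABP} to $W$ on $B_r(x_0)$ (obtained from the stated $B_1$ version by the routine dilation $y = (x-x_0)/r$) with the strictly positive constant right-hand side $\epsilon_0$ makes the ABP integral term drop, since $\epsilon_0^- \equiv 0$. This leaves $\sup_{B_r(x_0)} W \leq \sup_{\partial B_r(x_0)} W^+ = 0$, contradicting $W(x_0) > 0$.

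I do not anticipate any serious obstacle. The two mild subtleties are the quartic strictification of the maximum (standard in viscosity theory) and the observation that \Cref{thm-ABP} with a strictly positive right-hand side degenerates to a free weak maximum principle, which is the cleanest way to convert the a.e.\ lower bound on $\mathcal{M}^+(D^2(u-\varphi))$ into a global obstruction at an interior maximum point.
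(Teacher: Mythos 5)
Your proof is correct and follows essentially the same route as the paper's: argue by contradiction, use the subadditivity in \Cref{lem-pucci}(iv) to transfer the a.e.\ inequality from $u$ to $u-\varphi$, and then invoke \Cref{thm-ABP} on a small ball where the continuous data make the differential inequality strictly positive, so the ABP integral term vanishes. The only difference is that you spell out two details the paper leaves implicit --- upgrading the touching point to a strict maximum via a quartic perturbation, and shifting $u-\varphi$ by the constant $\tfrac{m+M}{2}$ so that the ABP boundary term $\sup_{\partial B_r}(\cdot)^+$ genuinely vanishes rather than merely bounding $u-\varphi$ by its boundary supremum. This extra bookkeeping is worthwhile: as stated, the paper's final line compares $(u-\varphi)(x_0)$ with $\sup_{\partial B_\eta(x_0)}(u-\varphi)$, but \Cref{thm-ABP} actually produces $\sup_{\partial}(u-\varphi)^+$ on the right, so without a normalization (as you supply) the contradiction is not immediate when $u-\varphi$ is negative near $x_0$. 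Substantively, though, the two arguments are the same.
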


\begin{proof}
	Since $\theta \geq n$, we have $u \in C(B_1)$. We assume by contradiction that for some $\varphi \in C^2(B_1)$, $u-\varphi$ has a (strict) local maximum at $x_0 \in B_1$ and 
	\begin{equation*}
		\mathcal{M}_{\lambda(x_0), \Lambda(x_0)}^+(D^2\varphi(x_0)) < f(x_0).
	\end{equation*}
	By the continuity of $\lambda, \Lambda, f$, we have
	\begin{equation*}
		\mathcal{M}_{\lambda, \Lambda}^+(D^2\varphi) < f
	\end{equation*}
	near $x_0$. On the other hand, we observe from \Cref{lem-pucci} that
	\begin{equation*}
		\mathcal{M}_{\lambda, \Lambda}^+(D^2(u-\varphi)) \geq \mathcal{M}_{\lambda, \Lambda}^+(D^2u)-\mathcal{M}_{\lambda, \Lambda}^+(D^2\varphi)>0 \quad \text{a.e. in $B_{\eta}(x_0)$ for some $\eta>0$}.
	\end{equation*}
We now apply \Cref{thm-ABP} in $B_{\eta}(x_0)$ to conclude that
\begin{equation*}
	(u-\varphi)(x_0) \leq \sup_{\partial B_{\eta}(x_0)} (u-\varphi),
\end{equation*}
which leads to a contradiction.
\end{proof}

We say a measurable function $F: \mathcal{S}^n \times B_1 \to \mathbb{R}$ is $(\lambda(\cdot), \Lambda(\cdot))$-elliptic if
\begin{equation*}
	\mathcal{M}_{\lambda, \Lambda}^-(N)(x) \leq F(M+N, x)-F(M, x) \leq \mathcal{M}_{\lambda, \Lambda}^+(N)(x)
\end{equation*}
for any $M, N \in \mathcal{S}^n$ and $x \in B_1$ a.e.. We note that the Pucci extremal operators $\mathcal{M}_{\lambda, \Lambda}^{\pm}$ are $(\lambda(\cdot), \Lambda(\cdot))$-elliptic. The notion of $L^{\theta}$-strong solution defined in \Cref{def-strong} can be easily extended to such fully nonlinear operators $F$.

\begin{corollary}[Comparison principle]
	Let $f \in L^{\tau}(B_1)$ and $F$ be $(\lambda(\cdot), \Lambda(\cdot))$-elliptic. Suppose that $u,v \in W^{2, \theta}(B_1)$ are $L^{\theta}$-strong subsolution and supersolution of $F(D^2w,x)=f(x)$ in $B_1$, respectively. If $u \leq v$ on $\partial B_1$, then $u \leq v$ in $B_1$.		
\end{corollary}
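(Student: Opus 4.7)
The plan is to reduce the claim to \Cref{thm-ABP} applied to the difference $w := u - v$. Since $u, v \in W^{2,\theta}(B_1)$, the difference $w$ also lies in $W^{2,\theta}(B_1)$, and $w$ has well-defined boundary values on $\partial B_1$ (for $\theta \geq n$ the Sobolev embedding puts $w$ in $C(\overline{B_1})$), so the hypothesis $u \leq v$ on $\partial B_1$ translates to $w \leq 0$, equivalently $w^+ \equiv 0$, on $\partial B_1$.

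The key step is to linearize using $(\lambda(\cdot), \Lambda(\cdot))$-ellipticity: applying the defining inequality with $M = D^2 v(x)$ and $N = D^2 u(x) - D^2 v(x)$, so that $M+N = D^2 u(x)$, yields
\begin{equation*}
F(D^2 u, x) - F(D^2 v, x) \leq \mathcal{M}_{\lambda, \Lambda}^+(D^2 u - D^2 v)(x) = \mathcal{M}_{\lambda, \Lambda}^+(D^2 w)(x)
\end{equation*}
for a.e.\ $x \in B_1$. Combining this with the strong subsolution inequality $F(D^2 u, x) \geq f(x)$ and the strong supersolution inequality $F(D^2 v, x) \leq f(x)$ gives
\begin{equation*}
\mathcal{M}_{\lambda, \Lambda}^+(D^2 w)(x) \geq F(D^2 u, x) - F(D^2 v, x) \geq 0 \quad \text{a.e. in $B_1$},
\end{equation*}
so $w$ is an $L^{\theta}$-strong subsolution of $\mathcal{M}_{\lambda, \Lambda}^+(D^2 w) \geq 0$ in $B_1$ in the sense of \Cref{def-strong}.

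Now \Cref{thm-ABP}, applied to $w$ with forcing term identically zero, produces
\begin{equation*}
\sup_{B_1} w \leq \sup_{\partial B_1} w^+ + C\left(\int_{\Gamma^+(w^+)} \left(\frac{0}{\lambda(x)}\right)^n \sd x\right)^{1/n} = 0,
\end{equation*}
since $w^+ \equiv 0$ on $\partial B_1$. This is precisely the desired conclusion $u \leq v$ in $B_1$.

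I do not expect any substantial obstacle: the argument is the standard fully nonlinear linearization trick, and once the pointwise inequality $\mathcal{M}_{\lambda, \Lambda}^+(D^2 w) \geq 0$ a.e.\ is established, \Cref{thm-ABP} does the rest. The only mild technical points to verify are the regularity/trace of $w$ on $\partial B_1$ so that the boundary comparison is meaningful, and the fact that the nonhomogeneous term in the ABP estimate vanishes identically so no integrability issue on $f^{-}/\lambda$ arises.
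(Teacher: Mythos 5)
Your proposal is correct and follows exactly the paper's argument: linearize $F$ via the $(\lambda(\cdot),\Lambda(\cdot))$-ellipticity condition applied to $M=D^2v$, $N=D^2(u-v)$ to get $\mathcal{M}^+_{\lambda,\Lambda}(D^2(u-v))\geq 0$ a.e., then conclude with \Cref{thm-ABP} and the boundary condition $u\leq v$ on $\partial B_1$. The paper's proof is a more terse version of the same three lines.
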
	

\begin{proof}
	By the definition of $(\lambda(\cdot), \Lambda(\cdot))$-ellipticity, we have
	\begin{equation*}
		\mathcal{M}_{\lambda, \Lambda}^+(D^2(u-v))(x) \geq F(D^2u, x)-F(D^2v, x)  \geq 0.
	\end{equation*}
	The desired result follows from \Cref{thm-ABP}.
\end{proof}

\section{Local estimates}\label{sec-localest}
In this section, we develop interior a priori estimates of $L^{\theta}$-strong solutions of non-uniformly elliptic Pucci extremal operators. The proof utilizes the ABP maximum principle [\Cref{thm-ABP}].

We begin with the local boundedness for $L^{\theta}$-strong subsolutions.
\begin{proof}[Proof of \Cref{thm-localboundedness}]
	For simplicity, we omit ``a.e." if no confusion occurs. For $\beta \geq 2$ to be determined, we define a cut-off function $\eta$ by
	\begin{align}\label{eq-cutoff}
		\eta(x)=(1-|x|^2)^{\beta}.
	\end{align}
	Then we immediately observe
	\begin{align*}
		D_i\eta&=-2\beta x_i (1-|x|^2)^{\beta-1}.\\
		D_{ij}\eta&=-2\beta\delta_{ij}(1-|x|^2)^{\beta-1}+4\beta(\beta-1)x_ix_j(1-|x|^2)^{\beta-2}.
	\end{align*}
	By setting $v=\eta u$, we have
	\begin{align*}
		\mathcal{M}_{\lambda, \Lambda}^+(D^2v)&=\mathcal{M}_{\lambda, \Lambda}^+(\eta D^2u+Du\otimes D\eta+D\eta\otimes Du+u D^2\eta) \\
		&\geq \mathcal{M}_{\lambda, \Lambda}^+(\eta D^2u)+\mathcal{M}_{\lambda, \Lambda}^-(Du\otimes D\eta+D\eta\otimes Du+u D^2\eta)\\
		&=:I_1+I_2,
	\end{align*}
	where we write $(x \otimes y)_{ij}=x_iy_j$ for $x, y \in \mathbb{R}^n$. For $I_1$, we have
		\begin{align*}
			I_1=\eta \mathcal{M}_{\lambda, \Lambda}^+(D^2u) \geq \eta f \geq -f^-.
		\end{align*}
	For $I_2$, we let $\Gamma^+_v$ be the upper contact set of $v^+$ in $B_1$; then we have $u \geq 0$ and $v$ is concave on $\Gamma_v^+$. Moreover, on $\Gamma_v^+$, we observe that
		\begin{align*}
			|Du|&=\frac{1}{\eta}|Dv-uD\eta| \leq\frac{1}{\eta}\left(\frac{v}{1-|x|}+u|D\eta|\right)\leq 2(1+\beta)\eta^{-1/\beta}u.
		\end{align*}
	It follows from the estimates
	\begin{align*}
		|Du||D\eta| &\leq 4\beta(1+\beta) \eta^{-2/\beta}v\leq 8\beta^2 \eta^{-2/\beta}v,\\
		u|D^2\eta| &\leq (2\beta \eta^{1/\beta}+4\beta(\beta-1))\eta^{-2/\beta}v\leq 4\beta^2 \eta^{-2/\beta}v,
	\end{align*}
	that
	\begin{align*}
		I_2 \geq -20\Lambda \beta^2 \eta^{-2/\beta}v.
	\end{align*}
	We now apply the ABP estimates [\Cref{rmk-abp}] to derive 
		\begin{align*}
		\sup_{B_1} v &\leq C \left(\int_{\Gamma_v^+}\left[ \left(    \frac{\beta^2\Lambda(x) \eta^{-2/\beta}(x)v^+(x)}{\lambda(x)}\right)^n+\left(\frac{f^-(x)}{\lambda(x)}\right)^n\right] \,\mathrm{d}x\right)^{1/n}\\
		&\leq C\left( \left(\sup_{B_1} v^+\right)^{1-2/\beta} \left\|(u^+)^{2/\beta}\Lambda/\lambda\right\|_{L^n(B_1)}   +\|f^-/\lambda\|_{L^n(B_1)}\right)
	\end{align*} 
	By choosing $\beta=2n/t$ (provided that $t \leq n$) and using Young's inequality,
	\begin{align*}
		\left(\sup_{B_1} v^+\right)^{1-t/n} \leq \varepsilon \sup_{B_1} v^++\varepsilon^{1-n/t} \quad \text{for any $\varepsilon>0$}.
	\end{align*}
	In particular, the choice
	\begin{align*}
		\varepsilon=\frac{1}{2C}\left\|(u^+)^{t/n}\Lambda/\lambda\right\|_{L^n(B_1)}  ^{-1}
	\end{align*}
	yields that
	\begin{align*}
		\sup_{B_{1/2}} u \leq C\left(\left\|(u^+)^{t/n}\Lambda/\lambda\right\|_{L^n(B_1)} ^{n/t}+\|f^-/\lambda\|_{L^n(B_1)} \right).
	\end{align*}
Finally, an application of H\"older's inequality concludes that
\begin{align*}
	\sup_{B_{1/2}} u \leq C\left(\|1/\lambda\|_p^{n/t}\|\Lambda\|_{q}^{n/t}\|u^+\|_{\theta t/n}+\|f^-/\lambda\|_{n} \right).
\end{align*}
\end{proof}

We now move our attention to the weak Harnack inequality for $L^{\theta}$-strong supersolutions.
\begin{proof}[Proof of \Cref{thm-weakharnack}]
	For $\varepsilon>0$, we set
	\begin{align*}
		\overline{u}&=u+\varepsilon+\|f/\lambda\|_{L^n(B_{1})},\\
		w&=-\log \overline{u}, \quad v=\eta w, \quad \text{and} \quad g=f/\overline{u},
	\end{align*}
	where $\eta$ is the cut-off function defined by \eqref{eq-cutoff} with $\beta \geq 2$ to be determined. It is easily checked that
	\begin{align*}
		D_iw&=-\overline{u}^{-1}D_i\overline{u},\\
		D_{ij}w&=\overline{u}^{-2}D_i\overline{u}D_j\overline{u}-\overline{u}^{-1}D_{ij}\overline{u}=D_iwD_jw-\overline{u}^{-1}D_{ij}{u}.
	\end{align*}
	Then a direct calculation yields that
	\begin{equation}\label{eq-comp1}
	\begin{aligned}
	&\mathcal{M}_{\lambda, \Lambda}^+(D^2v)\\
	&=\mathcal{M}_{\lambda, \Lambda}^+(\eta D^2w+2Dw\otimes D\eta+w D^2\eta) \\
	&=\mathcal{M}_{\lambda, \Lambda}^+(-\eta \overline{u}^{-1}D^2{u}+\eta Dw \otimes Dw+Dw\otimes D\eta+D\eta\otimes Dw+w D^2\eta)\\
	&\geq \mathcal{M}_{\lambda, \Lambda}^+(-\eta \overline{u}^{-1}D^2{u})+\mathcal{M}_{\lambda, \Lambda}^-(\eta Dw \otimes Dw+Dw\otimes D\eta+D\eta\otimes Dw)+\mathcal{M}_{\lambda, \Lambda}^-(w D^2\eta)\\
	&\geq -g\eta+\mathcal{M}_{\lambda, \Lambda}^-(\eta Dw \otimes Dw+Dw\otimes D\eta+D\eta\otimes Dw)+\mathcal{M}_{\lambda, \Lambda}^-(w D^2\eta).
\end{aligned}
\end{equation}

\begin{enumerate}[label=(\roman*)]
	\item We first prove the following Cauchy-Schwarz inequality for matrices:
	\begin{align*}
		\pm(Dw \otimes D\eta +D\eta \otimes Dw) \leq \eta^{-1}D\eta\otimes D\eta+\eta Dw\otimes Dw.
	\end{align*}
	It can be written in an equivalent form:
	\begin{align*}
		\left|\langle (Dw \otimes D\eta +D\eta \otimes Dw)a, a \rangle \right| \leq \langle (\eta^{-1}D\eta\otimes D\eta+\eta Dw\otimes Dw)a, a \rangle \quad \text{for any $a \in \mathbb{R}^n$}.
	\end{align*}
	Indeed, this inequality follows from a simple observation
	\begin{align*}
		\langle (b \otimes c)a, a\rangle=[(b\otimes c)a]_i a_i=(b\otimes c)_{ij}a_ja_i=a_ib_i a_jc_j=\langle a, b \rangle \langle a, c\rangle
	\end{align*}
	for any $a, b, c \in \mathbb{R}^n$.
	
	\item We control the term $\eta^{-1}|D\eta|^2$ as 
	\begin{align*}
		\eta^{-1}|D\eta|^2 \leq 4\beta^2\eta^{1-2/\beta}.
	\end{align*}
	
	\item The eigenvalues of $D^2\eta$ are 
	\begin{align*}
		4\beta(\beta-1)(1-|x|^2)^{\beta-2}|x|^2-2\beta(1-|x|^2)^{\beta-1} \quad &\text{with multiplicity $1$},\\
		-2\beta(1-|x|^2)^{\beta-1} \quad &\text{with multiplicity $n-1$}.
	\end{align*}
	We note that the first eigenvalue is nonnegative if $\alpha \leq |x| \leq 1$ and $\beta \geq 1+1/(2\alpha^2)$ for $\alpha:=1/(3n)$.
	Therefore, for $\alpha \leq |x| \leq 1$ and $\beta \geq 1+1/(2\alpha^2)$, we obtain 
	\begin{equation*}
		\begin{aligned}
			&\mathcal{M}_{\lambda, \Lambda}^-(D^2\eta)\\
			 &= \lambda [4\beta (\beta-1)(1-|x|^2)^{\beta-2}|x|^2-2\beta (1-|x|^2)^{\beta-1}] -\Lambda(n-1)[2\beta (1-|x|^2)^{\beta-1}]\\
			&= \lambda [4\beta (\beta-1)(1-|x|^2)^{\beta-2}|x|^2] -(\lambda+(n-1)\Lambda)[2\beta (1-|x|^2)^{\beta-1}]\\
			&=2\beta (1-|x|^2)^{\beta-2}\left[2\lambda(\beta-1)|x|^2-(\lambda+(n-1)\Lambda) (1-|x|^2)\right].
		\end{aligned}
	\end{equation*}
	On the other hand, if $|x| \leq \alpha$, then
		\begin{equation*}
		\begin{aligned}
			\mathcal{M}_{\lambda, \Lambda}^-(D^2\eta) \geq -\Lambda n[2\beta (1-|x|^2)^{\beta-1}].
		\end{aligned}
	\end{equation*}
\end{enumerate}
By taking the previous estimates obtained in (i), (ii), and (iii) account into \eqref{eq-comp1}, we have
\begin{align*}
		\mathcal{M}_{\lambda, \Lambda}^+(D^2v) &\geq -g\eta-\eta^{-1}\mathcal{M}_{\lambda, \Lambda}^+(D\eta \otimes D\eta)+\mathcal{M}_{\lambda, \Lambda}^-(w D^2\eta)\\
		&=-g\eta-\eta^{-1}\Lambda|D\eta|^2+\mathcal{M}_{\lambda, \Lambda}^-(w D^2\eta)\\
		&\geq -|g|-4\beta^2\Lambda-\frac{2\Lambda n\beta}{ 1-\alpha^2} v^+\mathbf{1}_{\{|x| \leq \alpha\}}\\
		&\quad+2\beta (1-|x|^2)^{-2}\left[2\lambda(\beta-1)|x|^2-(\lambda+(n-1)\Lambda) (1-|x|^2)\right]v^+ \mathbf{1}_{\{|x| \geq \alpha\}}\\
		&=:\widetilde{f}
\end{align*}
on $\Gamma_v^+$. We now apply the ABP estimates [\Cref{rmk-abp}] to derive 
\begin{align*}
	\sup_{B_1} v &\leq C \left(\int_{\Gamma_v^+}\left(\frac{\widetilde{f}^-(x)}{\lambda(x)}\right)^n \,\mathrm{d}x\right)^{1/n}.
\end{align*} 
Therefore, by recalling that $\|g/\lambda\|_{n}\leq 1$, we obtain
\begin{align*}
	\sup_{B_1} v &\leq C+C\beta^2\|\Lambda/\lambda\|_n+C\beta \|\Lambda/\lambda\|_n \sup_{B_1} v \cdot |\{|x| \leq \alpha\} \cap \{v>0\}|^{1/n} \\
	&\quad+C\sup_{B_1} v \left(\int_{\alpha\leq |x|\leq 1}\left[ \left( \frac{\Lambda}{\lambda}-\frac{\beta}{1-|x|^2} \right)_+ \frac{\beta}{1-|x|^2}\right]^n  \,\mathrm{d}x \right)^{1/n}.
\end{align*}
Since $1/p+1/q<1/(2n)$, an application of H\"older inequality yields that
	\begin{align*}
    \int_{\alpha\leq |x|\leq 1}\left[ \left( \frac{\Lambda}{\lambda}-\frac{\beta}{1-|x|^2} \right)_+ \frac{\beta}{1-|x|^2}\right]^n \,\mathrm{d}x & \leq  \int_{\{\alpha\leq |x|\leq 1\} \cap U_{\beta} } \left(  \frac{\Lambda}{\lambda}  \right) ^{2n }  \,\mathrm{d} x
    \\ & \leq \|1/\lambda\|_p^{2n}\|\Lambda\|_q^{2n} \left|U_{\beta}\right|^{1-(2n)/p-(2n)/q}
 \end{align*}
where
\begin{equation*}
	U_{\beta}:=\left\{|x| \leq 1: \frac{\Lambda(x)}{\lambda(x)} \geq \frac{\beta}{1-|x|^2}\right\}.
\end{equation*}
 We also have the following inequality:
   \begin{align*}
     |U_{\beta}| \leq \left| \left\{\frac{\Lambda}{\lambda} \geq \beta\right\}\right| \leq \beta^{-2n} \int |\Lambda/\lambda|^{2n}.
   \end{align*}
	Hence, there exists a constant $\beta>0$ which depends only on $ \left \| 1/\lambda \right \|_{p}$, $\| \Lambda \|_{L^q}$, and $n$ such that
	\begin{align*}
	\int_{\alpha\leq |x|\leq 1}\left[ \left( \frac{\Lambda}{\lambda}-\frac{\beta}{1-|x|^2} \right)_+ \frac{\beta}{1-|x|^2}\right]^n  \,\mathrm{d}x \leq \frac{1}{(2C)^n}.
	\end{align*}
  By combining all estimates above, we conclude that
   \begin{align}  \label{eq:ABP_nonuniformly:2}
     \sup_{B_1} v \le C+C\sup_{B_1} v \cdot |\{|x| \leq \alpha\} \cap \{v>0\}|^{1/n} .
   \end{align}
	In order to finish the proof, we would like to exploit the cube decomposition lemma [\Cref{lem-cube}]. For this purpose, let us define $K_R(z)$ be the open cube, parallel to the coordinate axes, with center $z$ and the side length $2R$. Since $B_{\alpha}  \subset K _{\alpha}(0) \subset \subset B_1$ for $\alpha=1/(3n)$, we have
    \begin{align*}
   \sup_{B_1} v \le  C(1+ \sup_{B_1} v^+ |K_{\alpha}^+|^{\frac{1}{n}}),
   \end{align*}
   where $K_{\alpha}^+:= \{x \in K_{\alpha} \,| \,v>0\}$. Hence, whenever
    \begin{align*}
      \frac{|K_{\alpha}^+|}{ |K_{\alpha}|} \le \theta := [2(2\alpha)^nC]^{-1}, 
   \end{align*}
we obtain
 \begin{align*}
\sup_{B_1} v \le 2C.
\end{align*}
We point out that 
\begin{enumerate}[label=(\roman*)]
	\item this procedure is stable under the transformation $x \to \alpha(x-z)/r$ for $B_{r/\alpha}(z) \subset B_1(0)$;
	
	\item we can repeat this argument for $w-k$ instead of $w$ for arbitrary $k \in \mathbb{R}$.
\end{enumerate}
Thus, by applying \Cref{lem-cube} with $\delta = 1-\theta, \; K_0 = K_{\alpha}(0)$, and $\alpha=1/(3n)$, we obtain 
 \begin{align*}
\sup_{K_0} \, (w-k) \le  C \left( 1 + \frac{ \log (|D_k| / |K_0|)}{\log \delta} \right),
\end{align*}
where $D_k= \left\{ x \in K_0 \,|\, w(x) \le k \right\}$. In other words, if we write  
 \begin{align*}
\mu_t = |\{  x \in K_0\,| \,\bar{u} >t \}| \quad \text{with $t=e^{-k}$},
\end{align*}
 then
 \begin{align*}
\mu_t \le C\left(\inf_{K_0} \bar{u}/t\right)^{\kappa},
\end{align*} 
where $C$ and $\kappa$ are positive universal constants. By recalling \cite[Lemma 9.7]{GT01}, we obtain 
 \begin{align*}
\int_{B_{\alpha}} \bar{u}^t \le C \left(\inf_{B_{\alpha}}\bar{u}\right)^t,
\end{align*}
for $t=\kappa/2$. The desired weak Harnack inequality follows by letting $\varepsilon \to 0$ together with the covering and scaling argument. 
\end{proof}

We remark that if $u$ is a strong solution of $F(D^2u, x)=f(x)$ for a $(\lambda(\cdot), \Lambda(\cdot))$-elliptic operator $F$ with $F(0, x)=0$,  then $u$ is contained in the \textit{(extended) Pucci class}, i.e., $u$ satisfies
\begin{equation*}
		\mathcal{M}_{\lambda, \Lambda}^+(D^2u) \geq -|f| \quad \text{and} \quad 	\mathcal{M}_{\lambda, \Lambda}^-(D^2u) \leq |f|.
\end{equation*}
Indeed, the following corollaries hold for a wide class of functions: not only solutions of degenerate/singular fully nonlinear equations, but also functions in the (extended) Pucci class.

\begin{corollary}[Harnack inequality]\label{cor-harnack}
	Let $f \in L^{\tau}(B_1)$. Assume that
	\begin{align*}
		\frac{1}{p}+\frac{1}{q}<\frac{1}{2n}.
	\end{align*}
	Moreover, suppose that $u \in W^{2, \theta}_{\mathrm{loc}}(B_1)$ be an nonnegative $L^{\theta}$-strong solution of
	\begin{align*}
		\mathcal{M}_{\lambda, \Lambda}^+(D^2u) \geq -|f| \quad \text{and} \quad 	\mathcal{M}_{\lambda, \Lambda}^-(D^2u) \leq |f|  \quad  \text{in $B_1$}.
	\end{align*}
	Then there exists a constant $C>0$ depending only on $\|1/\lambda\|_p$, $\|\Lambda\|_q$, and $n$ such that
	\begin{align*}
		\sup_{B_{1/2}}u \leq C \left(\inf_{B_{1/2}}u+\|f/\lambda\|_{L^n(B_1)}\right).
	\end{align*}
\end{corollary}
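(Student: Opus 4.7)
The plan is to obtain the Harnack inequality by concatenating the weak Harnack inequality [\Cref{thm-weakharnack}] with the local boundedness estimate [\Cref{thm-localboundedness}] on a pair of nested subballs, and then upgrading the resulting nested-ball statement to the full ball $B_{1/2}$ via a standard Harnack-chain argument. Note that the two Pucci inequalities in the hypothesis make $u$ simultaneously an $L^{\theta}$-strong subsolution of $\mathcal{M}^+_{\lambda,\Lambda}(D^2u) \geq -|f|$ and an $L^{\theta}$-strong supersolution of $\mathcal{M}^-_{\lambda,\Lambda}(D^2u) \leq |f|$, so both interior estimates from \Cref{sec-localest} are directly applicable.

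The first step is to record a scale-invariant version of \Cref{thm-localboundedness} and \Cref{thm-weakharnack}. For any $B_r(x_0) \subset B_1$ with $r$ bounded below by a fixed $r_0 > 0$, the rescaling $\tilde u(y) = u(x_0 + r y)$ satisfies the analogous Pucci inequalities on $B_1$ with forcing $r^2 f(x_0 + r\,\cdot\,)$; a direct change of variables yields
\begin{equation*}
  \|\tilde f/\tilde \lambda\|_{L^n(B_1)} = r \|f/\lambda\|_{L^n(B_r(x_0))}, \qquad \|1/\tilde\lambda\|_{L^p(B_1)} = r^{-n/p}\|1/\lambda\|_{L^p(B_r(x_0))},
\end{equation*}
and an analogous identity for $\tilde\Lambda$. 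Since $r \geq r_0$, the constants appearing in \Cref{thm-localboundedness} and \Cref{thm-weakharnack} remain controlled by $n$, $\|1/\lambda\|_{L^p(B_1)}$, and $\|\Lambda\|_{L^q(B_1)}$ after rescaling.

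Next, let $t > 0$ be the exponent provided by \Cref{thm-weakharnack}. The key observation is that \Cref{thm-localboundedness} should be applied with the parameter $t' := n t/\theta$, so that the Lebesgue exponent $\theta t'/n = t$ on its right-hand side matches precisely the $L^t$ norm controlled by the weak Harnack inequality. Applying the rescaled \Cref{thm-localboundedness} on $B_{r/2}(x_0)$ and then the rescaled \Cref{thm-weakharnack} on $B_r(x_0)$, and chaining the two estimates, I expect to obtain the nested-ball Harnack inequality
\begin{equation*}
  \sup_{B_{r/4}(x_0)} u \leq C \Bigl(\inf_{B_{r/2}(x_0)} u + \|f/\lambda\|_{L^n(B_r(x_0))}\Bigr)
\end{equation*}
whenever $B_r(x_0) \subset B_1$ with $r \geq r_0$, where $C$ depends only on $n$, $\|1/\lambda\|_{L^p(B_1)}$, and $\|\Lambda\|_{L^q(B_1)}$.

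Finally, to close the gap between the radii $r/4$ and $r/2$, I will fix $r = 1/4$, for which $B_r(x_0) \subset B_1$ for every $x_0 \in B_{1/2}$, and then run a Harnack-chain argument on $B_{1/2}$. Continuity of $u$, which follows from the Sobolev embedding $W^{2,\theta}_{\mathrm{loc}} \hookrightarrow C^0$ since $\theta \geq n$, allows point values of $u$ to be compared. Covering $B_{1/2}$ by a universally bounded number of such small balls and iterating the nested-ball inequality along a chain connecting an approximate maximizer to an approximate minimizer of $u$ in $B_{1/2}$ then yields the claimed Harnack inequality. The main obstacle I anticipate lies in the bookkeeping of this combination step: one must verify that the choice $t' = nt/\theta$ genuinely synchronizes the two estimates and that all rescaling factors, together with the accumulated constants along the chain, remain controlled by $n$, $\|1/\lambda\|_{L^p(B_1)}$, and $\|\Lambda\|_{L^q(B_1)}$; once this is in place, the argument is a routine assembly of the main theorems.
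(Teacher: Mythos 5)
Your proposal is correct and follows essentially the same route as the paper, which simply invokes the standard combination of local boundedness and weak Harnack inequality (with a reference to Section 4 of Caffarelli--Cabr\'e) without spelling out the details. In particular, your choice $t' = nt/\theta$ to synchronize the $L^{\theta t'/n}$ norm in \Cref{thm-localboundedness} with the $L^t$ control from \Cref{thm-weakharnack}, together with the scale-invariant nested-ball version and the covering/chaining step, is exactly the standard argument the authors have in mind.
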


\begin{corollary}[A priori H\"older estimates]\label{cor-holder}
	Let $f \in L^{\tau}(B_1)$. Assume that
	\begin{align*}
		\frac{1}{p}+\frac{1}{q}<\frac{1}{2n}.
	\end{align*}
	Moreover, suppose that $u \in W^{2, \theta}_{\mathrm{loc}}(B_1)$ be an $L^{\theta}$-strong solution of
	\begin{align*}
		\mathcal{M}_{\lambda, \Lambda}^+(D^2u) \geq -|f| \quad \text{and} \quad 	\mathcal{M}_{\lambda, \Lambda}^-(D^2u) \leq |f|  \quad  \text{in $B_1$}.
	\end{align*}
	Then there exist constants $C, \alpha>0$ depending only on $\|1/\lambda\|_p$, $\|\Lambda\|_q$, and $n$ such that the following estimate holds:
	\begin{align*}
		\|u\|_{C^{\alpha}(\overline{B_{1/2}})} \leq C\left(\|u\|_{L^{\infty}(B_1)}+\|f/\lambda\|_{L^n(B_1)}\right).
	\end{align*}
\end{corollary}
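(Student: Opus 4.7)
The approach will be the classical oscillation-reduction argument of Krylov--Safonov, which derives interior H\"older regularity from the Harnack inequality of \Cref{cor-harnack}. Fix any $x_0 \in B_{1/2}$ and let $r \in (0, 1/4]$ so that $B_{2r}(x_0) \subset B_1$. Write $M_r := \sup_{B_r(x_0)} u$, $m_r := \inf_{B_r(x_0)} u$, and $\omega(r) := M_r - m_r$. By \Cref{lem-pucci}(ii), the nonnegative functions $M_r - u$ and $u - m_r$ both satisfy $\mathcal{M}^+(D^2\,\cdot\,) \geq -|f|$ and $\mathcal{M}^-(D^2\,\cdot\,) \leq |f|$ in $B_r(x_0)$, so they lie in the Pucci class on that ball.

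The key step is to apply \Cref{cor-harnack} after rescaling. Setting $\tilde v(y) := v(x_0 + ry)$ for $y \in B_1$, the rescaled ellipticity is $\tilde\lambda(y) = \lambda(x_0+ry)$, $\tilde\Lambda(y) = \Lambda(x_0+ry)$, and the rescaled forcing becomes $r^2 \tilde f$ with $\tilde f(y) := f(x_0+ry)$. A direct change of variables yields
\begin{equation*}
\|1/\tilde\lambda\|_{L^p(B_1)} \leq r^{-n/p}\|1/\lambda\|_{L^p(B_1)}, \qquad \|\tilde\Lambda\|_{L^q(B_1)} \leq r^{-n/q}\|\Lambda\|_{L^q(B_1)},
\end{equation*}
while $\|r^2 \tilde f / \tilde\lambda\|_{L^n(B_1)} \leq r \|f/\lambda\|_{L^n(B_1)}$. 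Applying \Cref{cor-harnack} to the rescalings of $M_r - u$ and $u - m_r$ and unwinding the scaling gives
\begin{equation*}
\sup_{B_{r/2}(x_0)}(M_r - u) \leq C_r\Bigl(\inf_{B_{r/2}(x_0)}(M_r - u) + r\|f/\lambda\|_{L^n(B_1)}\Bigr),
\end{equation*}
together with an analogous inequality for $u - m_r$. Adding these two and rearranging yields an oscillation decay of the form
\begin{equation*}
\omega(r/2) \leq \gamma_r\, \omega(r) + C_r'\, r\|f/\lambda\|_{L^n(B_1)}, \qquad \gamma_r := \tfrac{C_r-1}{C_r+1} \in (0,1).
\end{equation*}

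Provided $\gamma_r$ is bounded away from $1$, a standard iteration lemma (e.g.\ \cite[Lemma 8.23]{GT01}) converts this geometric decay into $\omega(r) \leq C r^\alpha(\|u\|_{L^\infty(B_1)} + \|f/\lambda\|_{L^n(B_1)})$ for some $\alpha \in (0,1)$. Applying this estimate to arbitrary pairs of points of $\overline{B_{1/2}}$, together with \Cref{thm-localboundedness} to absorb the forcing into an $L^\infty$-type bound, then produces the claimed $C^\alpha$ estimate.

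The main obstacle lies in the dependence of $C_r$ on the rescaled norms, which a priori grow like $r^{-n/p}$ and $r^{-n/q}$ as $r\to 0$. To extract a uniform H\"older exponent depending only on $\|1/\lambda\|_p$, $\|\Lambda\|_q$, and $n$, one needs to verify that the buffer provided by the strict inequality $1/p+1/q < 1/(2n)$ keeps $\gamma_r$ uniformly bounded away from $1$ along dyadic scales --- the same structural cushion that drives the proof of \Cref{thm-weakharnack}. Once this scaling bookkeeping is under control, the remainder of the argument is routine.
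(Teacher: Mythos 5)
Your proof follows exactly the route the paper gestures at: the Krylov--Safonov oscillation-reduction argument, applying \Cref{cor-harnack} to $M_r - u$ and $u - m_r$ at dyadic scales to produce a geometric decay of $\operatorname{osc}_{B_r(x_0)} u$, then invoking a standard iteration lemma. The paper itself gives only a one-line reference to \cite[Section 4]{CC95}, so at the level of strategy there is no discrepancy.

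The issue is the point you raise in your final paragraph and then wave away as ``routine scaling bookkeeping.'' It is not routine, and it is precisely where the uniformly elliptic argument of \cite{CC95} stops transferring verbatim. Your own computation shows that the rescaled norms behave as
\begin{align*}
	\|1/\tilde\lambda\|_{L^p(B_1)} = r^{-n/p}\,\|1/\lambda\|_{L^p(B_r(x_0))},
	\qquad
	\|\tilde\Lambda\|_{L^q(B_1)} = r^{-n/q}\,\|\Lambda\|_{L^q(B_r(x_0))},
\end{align*}
and the weak Harnack constant in \Cref{thm-weakharnack} is produced by choosing a parameter $\beta$ large enough that a quantity of the form $\|1/\lambda\|_p^{2n}\|\Lambda\|_q^{2n}|U_\beta|^{1-2n/p-2n/q}$ is below a fixed threshold, after which the final constant depends polynomially on $\beta$. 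If you trace the scaling through that choice, the negative powers of $r$ from the Jacobian beat the shrinkage of $\|1/\lambda\|_{L^p(B_r(x_0))}$ and $\|\Lambda\|_{L^q(B_r(x_0))}$ (which only satisfy the trivial bounds $\leq \|1/\lambda\|_{L^p(B_1)}$, $\leq \|\Lambda\|_{L^q(B_1)}$); the requirement on $\beta$, and hence the Harnack constant $C_r$, grows without bound as $r \to 0$. Consequently $\gamma_r = (C_r-1)/(C_r+1) \to 1$ along the dyadic sequence, and the product $\prod_k \gamma_{2^{-k}}$ converges to a positive limit rather than to zero, so the iteration lemma does not deliver any H\"older modulus. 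The strict inequality $1/p + 1/q < 1/(2n)$ is spent already in proving \Cref{thm-weakharnack} on the unit scale; it does not by itself buy scale-invariance. What you actually need is either a Harnack estimate whose constant depends on $\lambda$, $\Lambda$ only through scale-invariant quantities (such as local averages $\fint_{B_r(x_0)}|1/\lambda|^p$ with an a priori uniform bound), or an explicit verification that $\sup_{r,x_0}C_r < \infty$ under the stated hypotheses. Without one of these, the passage from \Cref{cor-harnack} to a uniform exponent $\alpha$ remains an open gap in the argument, and simply appealing to ``the same structural cushion'' does not close it.
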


\begin{proof}[Proof of \Cref{cor-harnack} and \Cref{cor-holder}]
	Once we obtain the local boundedness [\Cref{thm-localboundedness}] and the weak Harnack inequality [\Cref{thm-weakharnack}], both corollaries easily follow; we refer to \cite[Section 4]{CC95} for instance.
\end{proof}


\begin{thebibliography}{10}
	
	\bibitem{Ale66}
	A.~D. Aleksandrov.
	\newblock Majorants of solutions of linear equations of order two.
	\newblock {\em Vestnik Leningrad. Univ.}, 21(1):5--25, 1966.
	
	\bibitem{ACP11}
	R.~Argiolas, F.~Charro, and I.~Peral.
	\newblock On the {A}leksandrov-{B}akel'man-{P}ucci estimate for some elliptic
	and parabolic nonlinear operators.
	\newblock {\em Arch. Ration. Mech. Anal.}, 202(3):875--917, 2011.
	
	\bibitem{BBLL24}
	S.~Baasandorj, S.-S. Byun, K.-A. Lee, and S.-C. Lee.
	\newblock Global regularity results for a class of singular/degenerate fully
	nonlinear elliptic equations.
	\newblock {\em Math. Z.}, 306(1):Paper No. 1, 26, 2024.
	
	\bibitem{Bak61}
	I.~J. Bakel\cprime~man.
	\newblock On the theory of quasilinear elliptic equations.
	\newblock {\em Sibirsk. Mat. \v{Z}.}, 2:179--186, 1961.
	
	\bibitem{BS21}
	P.~Bella and M.~Sch\"{a}ffner.
	\newblock Local boundedness and {H}arnack inequality for solutions of linear
	nonuniformly elliptic equations.
	\newblock {\em Comm. Pure Appl. Math.}, 74(3):453--477, 2021.
	
	\bibitem{Cab95}
	X.~Cabr\'{e}.
	\newblock On the {A}lexandroff-{B}akel\cprime man-{P}ucci estimate and the
	reversed {H}\"{o}lder inequality for solutions of elliptic and parabolic
	equations.
	\newblock {\em Comm. Pure Appl. Math.}, 48(5):539--570, 1995.
	
	\bibitem{Caf88}
	L.~Caffarelli.
	\newblock Elliptic second order equations.
	\newblock {\em Rend. Sem. Mat. Fis. Milano}, 58:253--284, 1988.
	
	\bibitem{CCKS96}
	L.~Caffarelli, M.~G. Crandall, M.~Kocan, and A.~Swi\polhk{e}ch.
	\newblock On viscosity solutions of fully nonlinear equations with measurable
	ingredients.
	\newblock {\em Comm. Pure Appl. Math.}, 49(4):365--397, 1996.
	
	\bibitem{CS07}
	L.~Caffarelli and L.~Silvestre.
	\newblock An extension problem related to the fractional {L}aplacian.
	\newblock {\em Comm. Partial Differential Equations}, 32(7-9):1245--1260, 2007.
	
	\bibitem{Caf89}
	L.~A. Caffarelli.
	\newblock Interior a priori estimates for solutions of fully nonlinear
	equations.
	\newblock {\em Ann. of Math. (2)}, 130(1):189--213, 1989.
	
	\bibitem{CC95}
	L.~A. Caffarelli and X.~Cabr\'{e}.
	\newblock {\em Fully nonlinear elliptic equations}, volume~43 of {\em American
		Mathematical Society Colloquium Publications}.
	\newblock American Mathematical Society, Providence, RI, 1995.
	
	\bibitem{CIL92}
	M.~G. Crandall, H.~Ishii, and P.-L. Lions.
	\newblock User's guide to viscosity solutions of second order partial
	differential equations.
	\newblock {\em Bull. Amer. Math. Soc. (N.S.)}, 27(1):1--67, 1992.
	
	\bibitem{CKS00}
	M.~G. Crandall, M.~Kocan, and A.~\'{S}wi\polhk{e}ch.
	\newblock {$L^p$}-theory for fully nonlinear uniformly parabolic equations.
	\newblock {\em Comm. Partial Differential Equations}, 25(11-12):1997--2053,
	2000.
	
	\bibitem{CMM18}
	G.~Cupini, P.~Marcellini, and E.~Mascolo.
	\newblock Nonuniformly elliptic energy integrals with {$p,q$}-growth.
	\newblock {\em Nonlinear Anal.}, 177(part A):312--324, 2018.
	
	\bibitem{DS09}
	P.~Daskalopoulos and O.~Savin.
	\newblock On {M}onge-{A}mp\`ere equations with homogeneous right-hand sides.
	\newblock {\em Comm. Pure Appl. Math.}, 62(5):639--676, 2009.
	
	\bibitem{DFQ09}
	G.~D\'{a}vila, P.~Felmer, and A.~Quaas.
	\newblock Alexandroff-{B}akelman-{P}ucci estimate for singular or degenerate
	fully nonlinear elliptic equations.
	\newblock {\em C. R. Math. Acad. Sci. Paris}, 347(19-20):1165--1168, 2009.
	
	\bibitem{Esc93}
	L.~Escauriaza.
	\newblock {$W^{2,n}$} a priori estimates for solutions to fully nonlinear
	equations.
	\newblock {\em Indiana Univ. Math. J.}, 42(2):413--423, 1993.
	
	\bibitem{Fig17}
	A.~Figalli.
	\newblock {\em The {M}onge-{A}mp\`ere equation and its applications}.
	\newblock Zurich Lectures in Advanced Mathematics. European Mathematical
	Society (EMS), Z\"{u}rich, 2017.
	
	\bibitem{FS89}
	W.~H. Fleming and P.~E. Souganidis.
	\newblock On the existence of value functions of two-player, zero-sum
	stochastic differential games.
	\newblock {\em Indiana Univ. Math. J.}, 38(2):293--314, 1989.
	
	\bibitem{GT01}
	D.~Gilbarg and N.~S. Trudinger.
	\newblock {\em Elliptic partial differential equations of second order}.
	\newblock Classics in Mathematics. Springer-Verlag, Berlin, 2001.
	\newblock Reprint of the 1998 edition.
	
	\bibitem{Imb11}
	C.~Imbert.
	\newblock Alexandroff-{B}akelman-{P}ucci estimate and {H}arnack inequality for
	degenerate/singular fully non-linear elliptic equations.
	\newblock {\em J. Differential Equations}, 250(3):1553--1574, 2011.
	
	\bibitem{IS16}
	C.~Imbert and L.~Silvestre.
	\newblock Estimates on elliptic equations that hold only where the gradient is
	large.
	\newblock {\em J. Eur. Math. Soc. (JEMS)}, 18(6):1321--1338, 2016.
	
	\bibitem{KLY24}
	T.~Kim, K.-A. Lee, and H.~Yun.
	\newblock {G}eneralized {S}chauder theory and its application to
	degenerate/singular parabolic equations.
	\newblock {\em Mathematische Annalen}, pages 1--61, 2024.
	
	\bibitem{KS07}
	S.~Koike and A.~\'{S}wi\polhk{e}ch.
	\newblock Maximum principle for fully nonlinear equations via the iterated
	comparison function method.
	\newblock {\em Math. Ann.}, 339(2):461--484, 2007.
	
	\bibitem{KS22}
	S.~Koike and A.~\'{S}wi\polhk{e}ch.
	\newblock Aleksandrov-{B}akelman-{P}ucci maximum principle for
	{$L^p$}-viscosity solutions of equations with unbounded terms.
	\newblock {\em J. Math. Pures Appl. (9)}, 168:192--212, 2022.
	
	\bibitem{Kry76}
	N.~V. Krylov.
	\newblock Sequences of convex functions, and estimates of the maximum of the
	solution of a parabolic equation.
	\newblock {\em Sibirsk. Mat. \v{Z}.}, 17(2):290--303, 478, 1976.
	
	\bibitem{Moo15}
	C.~Mooney.
	\newblock Harnack inequality for degenerate and singular elliptic equations
	with unbounded drift.
	\newblock {\em J. Differential Equations}, 258(5):1577--1591, 2015.
	
	\bibitem{MS68}
	M.~K.~V. Murthy and G.~Stampacchia.
	\newblock Boundary value problems for some degenerate-elliptic operators.
	\newblock {\em Ann. Mat. Pura Appl. (4)}, 80:1--122, 1968.
	
	\bibitem{Puc66}
	C.~Pucci.
	\newblock Limitazioni per soluzioni di equazioni ellittiche.
	\newblock {\em Ann. Mat. Pura Appl. (4)}, 74:15--30, 1966.
	
	\bibitem{Tru71}
	N.~S. Trudinger.
	\newblock On the regularity of generalized solutions of linear, non-uniformly
	elliptic equations.
	\newblock {\em Arch. Rational Mech. Anal.}, 42:50--62, 1971.
	
	\bibitem{Tso85}
	K.~Tso.
	\newblock On an {A}leksandrov-{B}akel\cprime man type maximum principle for
	second-order parabolic equations.
	\newblock {\em Comm. Partial Differential Equations}, 10(5):543--553, 1985.
	
	\bibitem{Wan92a}
	L.~Wang.
	\newblock On the regularity theory of fully nonlinear parabolic equations. {I}.
	\newblock {\em Comm. Pure Appl. Math.}, 45(1):27--76, 1992.
	
\end{thebibliography}
\end{document}